\documentclass[11pt, twoside, letter]{amsart}
\usepackage[]{appendix}

\usepackage{matthew}
\usepackage{mathrsfs}

\usepackage[pdftex,plainpages=false,hypertexnames=false,pdfpagelabels]{hyperref}
\newcommand{\arXiv}[1]{\href{http://arxiv.org/abs/#1}{\tt arXiv:\nolinkurl{#1}}}
\newcommand{\arxiv}[1]{\href{http://arxiv.org/abs/#1}{\tt arXiv:\nolinkurl{#1}}}

\newcommand{\mathscinet}[1]{\href{http://www.ams.org/mathscinet-getitem?mr=#1}{\tt #1}}
\newcommand{\googlebooks}[1]{(preview at \href{http://books.google.com/books?id=#1}{google books})}
\renewcommand{\MR}[1]{}
\usepackage{hyperref}


\newcommand{\id}{{\mathbf 1}}

\allowdisplaybreaks[4]



\title{Classification of Metaplectic Fusion Categories}

\author{Eddy Ardonne}
\email{ardonne@fysik.su.se}
\address{Department of Physics\\Stockholm University
    \\Albanova University Center
    \\SE-106 91 Stockholm
    Sweden}
\author{Peter E. Finch}
\email{pfinch.mathphys@gmail.com}
\address{ Institut f{\"u}r Theoretische Physik \\
   Leibniz Universit{\"a}t Hannover \\
   Appelstra{\ss}e 2\\
   30167 Hannover\\
   Germany}
\author{Matthew Titsworth}
\email{matthew.titsworth@gmail.com}
\address{Department of Physics, the University of Texas at Dallas, USA}

\begin{document}
\maketitle
\begin{abstract}
In this paper, we study a family of fusion and modular systems realizing fusion categories Grothendieck equivalent to the representation category for $\sost{2p+1}$.
These categories describe non-abelian anyons dubbed `metaplectic anyons'.
We obtain explicit expressions for all the $F$- and $R$-symbols.
Based on these, we conjecture a classification for their monoidal equivalence classes from an analysis of their gauge invariants and define a function which gives us the number of classes.
\end{abstract}
\section{Introduction}\label{s:intro}

Landau \cite{Landau1937} showed that most phases of matter can be classified by investigating in which way the underlying symmetry of the system can be broken. There are, however, also phases of matter that defy this classification, in particular, topologically ordered phases \cite{wen95}, the most famous example being the fractional quantum Hall systems \cite{PhysRevLett.48.1559,PhysRevLett.50.1395}.

It is thus natural to try to classify topologically ordered phases, which can be described using the theory of modular tensor categories. Classifying all modular tensor categories is extremely difficult, with only partial results available, see for instance \cite{MR2544735}.

In this paper, we study a family of fusion and modular systems that go under the name of `metaplectic anyons'. Metaplectic anyon systems can be viewed as a generalization of the `Ising anyon' system, and were studied, for instance, in \cite{MR3215577,PhysRevB.87.165421}. Thus, we fix the fusion rules, and given these fusion rules, we try to find all possible, inequivalent physical solutions. In the process, we obtain explicit expressions for the $F$- and $R$-symbols, which is useful, for instance when one uses these metaplectic anyons to construct spin-chain like models as in \cite{Finch_2018}.
Because we use quite a bit of tensor category machinery, we are forced to take a mathematical point of view on the problem.

Fusion categories over a field $k$ generalize categories of finite group representations; they are tensor categories with finitely many classes of simple objects, all having duals.
They arise in representation theory\cite{MR1321145}, operator algebras\cite{MR1642584}, topological quantum field theories\cite{MR1797619}, and quantum invariants of $3$-manifolds\cite{MR2654259}.
They also play an important role physically in the study of topological quantum computing\cite{MR2640343} and topological phases of matter\cite{PhysRevB.71.045110}.

As such, a classification of fusion categories up to various notions of equivalence is desirable.
One such notion of equivalence is {\em Grothendieck equivalence}, wherein categories have isomorphic decategorifications, which is a based ring in the sense of \cite{MR1976459}.
Another notion of equivalence is {\em monoidal equivalence}, wherein categories are related via an invertible monoidal functor.

A natural line of inquiry is then: Given a based ring, does it admit categorification? 
If so, given its Grothendieck class of fusion categories, how many monoidal classes are there and how can they be distinguished?
This is an incredibly difficult question to answer in general, especially at the level of fusion categories, with no further assumptions.
Most results along these lines are either obtained for small examples which can be easily computed\cite{MR1981895,MR3427429,MR2559711}, but there are some families\cite{MR1659954,MR1237835} for which classifications are known.

In this paper we obtain an answer to these questions for a family of categories we will call metaplectic fusion categories. 
These are fusion categories underlying metaplectic modular categories.
Recall that a metaplectic modular category\cite{MR3215577,PhysRevB.87.165421} is a modular category Grothendieck equivalent to $\sost{2p+1}$, the category of affine $\mathfrak{so}(2p+1)$ representations with highest integer weight $2$\cite{MR1887583}.
Other common constructions for such categories come from $\mathcal C(B_p,2(2p+1),q)$\cite{MR2263097} for $q$ some $2(2p+1)$ root of unity, or from $\Z_2$ equivariantizations of $\mathcal{TY}(\Z_N,\chi, \tau)$\cite{MR2587410} where $N=2p+1$.
In \cite[Theorem 3.2]{2016arXiv160105460A} it was shown that all metaplectic modular categories arise from gauging the particle-hole symmetry in $\Z_{2p+1}$ modular categories.

Our main result is obtained by classifying solutions to the pentagon equations and hexagon equations coming from the $\sost{2p+1}$ based ring.
We stop short of demonstrating that these are all of the categories as this is a much harder problem.
However, the modular data for $\sost{2p+1}$ categories is known and the modular data computed for our solutions coincides.
As such, we refine the classification of \cite{2016arXiv160105460A} one step further.

For fixed $p$, our categories are parameterized by pairs $(r,\kappa)$ where $\kappa=\pm 1$ and $r$ a positive odd integer less than and co-prime to $2p+1$.
Let $R$ be the set of these $r$. 
Define $\mathbb G_{2p+1}^\times = \Z_{2p+1}^\times/\langle 1,-1\rangle$ with $g:\Z_{2p+1}^\times \rightarrow \mathbb G_{2p+1}^\times$ the quotient map.
Elements of $\Z_{2p+1}^\times$ can be represented as positive integers less than and co-prime to $2p+1$. 
Likewise, elements of $\mathbb G_{2p+1}^\times$ can be represented as positive integers less than or equal to $p$ and co-prime to $2p+1$.
Particularly, each $r\in R$ represents an element of $\Z_{2p+1}^\times$ and the action of $\mathbb G_{2p+1}^\times$ on $R$ via $z \cdot R = \{r z^2 \mod (2p+1) \vert r \in R\}$ makes sense.
Similarly, the evaluation $g(r)\vert r \in R$ makes sense.
Our main result can then be stated as follows
\begin{theorem}
For fixed $p$ the following are true:
\begin{enumerate}
	\item The monoidal classes of fusion categories constructed from $(r,\kappa)$, $\kappa=\pm 1$ and $r\in R$ are the fusion categories underlying metaplectic modular categories.
	\item Let $(r,\kappa)$ and $(r',\kappa')$ parameterize two different solutions to the pentagon equations.
		  Then the fusion categories constructed from these solutions are monoidally equivalent if and only if $\kappa = \kappa'$ and there exists $z\in \mathbb G_{2p+1}^\times$ such that $g(r')=g(r z^2)$.
	\item For $2p+1= p_{1}^{a_1}\ldots p_{l}^{a_l}$, there are exactly $2^{l+1}$ monoidally inequivalent metaplectic modular categories if $\exists b \in Z_{2p+1}^\times \vert b^2 = -1$, otherwise there are exactly $2^{l}$.
\end{enumerate}
\end{theorem}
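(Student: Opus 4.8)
The plan is to derive part (3) from parts (1) and (2) as a counting statement about the finite abelian group $\mathbb G_{2p+1}^\times$. By part (1) the fusion categories underlying metaplectic modular categories are exactly those constructed from the pairs $(r,\kappa)$, so by part (2) their monoidal equivalence classes correspond bijectively to pairs $(\kappa,\mathcal O)$ where $\kappa\in\{\pm1\}$ and $\mathcal O$ runs over the orbits of $\{g(r):r\in R\}$ under the action $\xi\mapsto\xi z^2$, $z\in\mathbb G_{2p+1}^\times$ (the transported form of the condition $g(r')=g(rz^2)$). First I would observe that $g$ restricts to a bijection $R\to\mathbb G_{2p+1}^\times$: the cosets of $\langle -1\rangle$ in $\Z_{2p+1}^\times$ are the pairs $\{a,\,2p+1-a\}$, and since $a$ and $2p+1-a$ differ by the odd number $2p+1-2a$ exactly one of them is odd; as these odd representatives lie in $\{1,\dots,2p\}$ and are coprime to $2p+1$, they are precisely the elements of $R$, one per coset. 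Hence the number of monoidal classes equals $2\cdot[\mathbb G_{2p+1}^\times:(\mathbb G_{2p+1}^\times)^2]$, since the orbits of the squaring action on $\mathbb G_{2p+1}^\times$ are the cosets of the subgroup of squares $(\mathbb G_{2p+1}^\times)^2$.

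It then remains to compute $N:=[\mathbb G_{2p+1}^\times:(\mathbb G_{2p+1}^\times)^2]$. Writing $n=2p+1=p_1^{a_1}\cdots p_l^{a_l}$, the Chinese Remainder Theorem gives $\Z_n^\times\cong\prod_{i=1}^{l}\Z_{p_i^{a_i}}^\times$, with each factor cyclic of even order $\varphi(p_i^{a_i})=p_i^{a_i-1}(p_i-1)$; hence $\Z_n^\times/(\Z_n^\times)^2\cong(\Z/2\Z)^l$. Since $\mathbb G_n^\times=\Z_n^\times/\langle -1\rangle$ and $(\mathbb G_n^\times)^2$ is the image of $(\Z_n^\times)^2$ under the quotient map, the third isomorphism theorem gives $\mathbb G_n^\times/(\mathbb G_n^\times)^2\cong\Z_n^\times/\langle(\Z_n^\times)^2,-1\rangle$, so that $N=2^l/[\langle(\Z_n^\times)^2,-1\rangle:(\Z_n^\times)^2]$.

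The case distinction in the statement is exactly whether $-1\in(\Z_n^\times)^2$. If some $b\in\Z_{2p+1}^\times$ satisfies $b^2=-1$, then $\langle(\Z_n^\times)^2,-1\rangle=(\Z_n^\times)^2$, so $N=2^l$ and the number of classes is $2^{l+1}$; otherwise the image of $-1$ in $(\Z/2\Z)^l$ is nontrivial, $[\langle(\Z_n^\times)^2,-1\rangle:(\Z_n^\times)^2]=2$, so $N=2^{l-1}$ and the number of classes is $2^l$. Given parts (1) and (2), there is no genuine obstacle in this argument; the only point that needs care is not conflating the two quotients of $\Z_{2p+1}^\times$ — by $\langle -1\rangle$ and by squares — since it is precisely their interaction that produces the dichotomy. (If desired one can make the criterion explicit: $b^2=-1$ is solvable modulo $2p+1$ if and only if every prime factor $p_i$ of $2p+1$ is congruent to $1$ modulo $4$, because in the cyclic group $\Z_{p_i^{a_i}}^\times$ of order $p_i^{a_i-1}(p_i-1)$ the unique element of order $2$ is a square precisely when $4\mid p_i-1$.)
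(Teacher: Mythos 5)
Your argument for part (3) is correct and, modulo group-theoretic bookkeeping, is the same counting the paper carries out in Section 5.2: both reduce the answer to $2\cdot[\mathbb G_{2p+1}^\times:(\mathbb G_{2p+1}^\times)^2]$ and resolve the dichotomy according to whether $-1$ is a square modulo $2p+1$. The difference is in how the index is evaluated. The paper computes it as the order of the kernel of the squaring map on $\mathbb G_{2p+1}^\times$, lifting to $\Z_{2p+1}^\times$ and counting square roots of $+1$ and of $-1$ separately (the sets $B_1$, $B_2$ with $|B_1|=2^l$ and $|B_2|\in\{0,2^l\}$), whereas you compute the cokernel directly, using the third isomorphism theorem to identify $\mathbb G_{2p+1}^\times/(\mathbb G_{2p+1}^\times)^2$ with $\Z_{2p+1}^\times/\langle(\Z_{2p+1}^\times)^2,-1\rangle$ and then asking whether $-1$ already lies in $(\Z_{2p+1}^\times)^2$. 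Your version is arguably cleaner and makes the role of $-1$ more transparent; your preliminary observation that $g$ restricts to a bijection $R\to\mathbb G_{2p+1}^\times$ (each coset $\{a,\,2p+1-a\}$ contains exactly one odd member) is correct and is used implicitly but never spelled out in the paper.

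The genuine gap is that your proposal proves only part (3), taking parts (1) and (2) as hypotheses. In the paper those two parts are where all the substantive work lives: part (2) is Theorem \ref{lem:trace}, which rests on Proposition \ref{prop:adjsub}, Corollaries \ref{cor:hdiag} and \ref{cor:hinvs}, and the gauge-invariant tuples $X_p(r,\kappa)$ built from the $F$-symbols $H(r,\kappa)$ via the machinery of Theorem \ref{thm:invs}; part (1) requires exhibiting the solutions of Section \ref{ss:AD}, verifying the pentagon and hexagon equations (Appendix \ref{a:proof}), and matching the resulting modular data against the known classification of metaplectic modular categories. None of that can be recovered from the purely arithmetic argument you give, so as a proof of the full three-part statement the proposal establishes only the final counting step. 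As a derivation of (3) from (1) and (2), however, it is complete and correct.
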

\noindent
As a sanity check on this, following \cite[Theorem 3.2]{2016arXiv160105460A}, we note that the number of metaplectic fusion categories admitting the structure of a modular category is always less than or equal to the number of metaplectic modular categories\footnote{There could, in principle, be fusion categories which do not admit a modular structure.}.

The structure of this paper is as follows.
In \ref{s:fusion} we review the basic information for fusion and modular categories and their arithmetic descriptions, fusion and modular systems.
In \ref{s:invs} we review the gauge invariants of \cite{2015arXiv150903275H} which allow us to extend deductions about our fusion systems to entire gauge and monoidal classes of fusion categories.
In \ref{s:sost} we present our solutions to the pentagon and hexagon equations, and construct the modular data.
In \ref{s:equivs} we determine the monoidal equivalence classes of the fusion categories.
In \ref{s:examples} we explicitly compute the equivalence classes and modular structures for several examples.
Appendix \ref{a:proof} demonstrates that the explicit $F$- and $R$-symbols we present are indeed solutions to the pentagon and hexagon equations.

\noindent {\em Acknowledgements.}
E.~Ardonne would like to thank Eric Rowell, Joost Slingerland and Zhenghan Wang for stimulating discussions.
M.~Titsworth would like to thank the organizers of the 2014 AMS Mathematics Research communities program on Fusion Categories and Quantum information: Zhenghan Wang, Eric Rowell, and Richard Ng.
E.~Ardonne was supported, in part, by the Swedish research council, under Grant No. 2015-05043.
P.~E.~Finch was supported by the Deutsche Forschungsgemeinschaft under Grant No. Fr 737/7-1.

\section{Preliminaries}\label{s:fusion}
In this section we review relevant facts about fusion/modular categories and their arithmetic descriptions.
We do not provide proofs, but most can be found in one of \cite{MR2183279, 2013arXiv1305.2229D, MR1976459, MR1797619}, or now \cite{MR3242743}.
In the sequel we will always work over $k=\C$ for simplicity.

\subsection{Fusion Categories and Modular Categories}
\begin{definition}[{\cite[Definitions 2.1 and 2.2]{MR1976459}}]\label{def:basedring}
A {\em unital based ring} $(R,B)$ is a $\Z^+$- ring $R$ together with a set $B\subset R$ and identity $1_R\in B$ such that
\begin{itemize}
	\item (Structure constants) There exist non-negative integers $N_{XY}^Z$ for $X,Y,Z\in B$ such that 
	\begin{displaymath}
	X Y = \sum_{Z\in B}N_{XY}^Z Z.
	\end{displaymath}
	\item (Duality) A bijection $*:B\rightarrow B$ such that $1_R^*=1_R$ which extends to an anti-involution on $(R,B)$, i.e. $(XY)^*=X^*Y^*,\forall X,Y\in B$.
\end{itemize}

By associativity the structure constants must satisfy 
\begin{equation}
\sum_{U\in B}N_{XY}^U N_{UZ}^W = \sum_{V\in B}N_{YZ}^V N_{XV}^W
\end{equation}
for all $U,V,W,X,Y,Z\in B$.

The structure constants define a map $N:B^{\times 3}\rightarrow \Z^+$ which we can extend recursively to arbitrary $n+1>3$ via
$$
N_{X_1 \ldots X_n}^{Y}:=\sum_{Z\in B}N_{X_1\ldots X_{n-1}}^{Z}N_{Z X_n}^Y
$$
for all $X_1,\ldots, X_n, Y\in B$.
Those structure constants which are non-zero will play an important role and so we define
$$\Gamma(R,B)=\{(X,Y,Z)\in B^{\times 3}\vert N_{XY}^Z\neq 0\}$$
with $\gamma_{XY}^Z$ the notation for $(X,Y,Z)$. 
We will say that $(R,B)$ is {\em multiplicity free} if $N(\Gamma(R,B))=\{1\}$.

For based rings $(R,B)$ and $(R,B')$, every bijection $\rho:B\rightarrow B'$ satisfying
\begin{equation}
N_{XY}^Z=N_{\phi(X)\phi(Y)}^{\phi(Z)},\forall X,Y,Z\in B
\end{equation}
defines a unique based ring isomorphism $\rho':(R,B)\rightarrow (R,B')$ and vice versa.
Let $Aut(R,B)$ to be the group of based ring automorphisms of $(R,B)$.
\end{definition}
\begin{remark}
Since all based rings in this paper are all multiplicity free, unless otherwise noted all future statements will be made under this assumption.
\end{remark}
\begin{definition}
Let $(R,B)$ be a based ring and define  $N_X$ be the matrix with entry $(N_{X})_{YZ}=N_{XY}^Z$. 
The {\em Frobenius-Perron dimension} $FP(X)$ of $X$ is the largest positive real eigenvalue of $N_X$.
The Frobenius-Perron dimension of $(R,B)$ is $FP((R,B))=\sum_{X \in B} FP(X)^2$.
\end{definition}

We now define fusion categories.

\begin{definition}\label{def:fusioncategory}
A {\em fusion category} $\mathcal C$ over $\C$ is a monoidal semi-simple Abelian category\footnote{We will denote the monoidal bifunctor of a fusion category $\mathcal C$ by $\otimes$ and direct sum of objects by $\oplus$} with identity $\mathbf 1$ such that
\begin{enumerate}
	\item ($\C$-linearity) $\mathcal C$ is enriched over $Vec_{Fin}(\C)$. This is to say that $\mathcal C(a,b)$ is a finite dimensional vector space over $k$ for all objects $a,b\in \mathcal C_0$.
	\item (Finiteness) There are finitely many isomorphism classes of simple objects in $\mathcal C_0$ and $\mathcal C(a,a)\cong \C$ for all simple objects $a\in \mathcal C_0$.
	\item (Rigidity) For every object $a\in \mathcal C_0$, there is an object $a^*\in \mathcal C_0$ and evaluation and co-evaluation maps
	\begin{align*}
	ev_{a}:a \otimes a^*&\rightarrow \mathbf 1 & coev_{a}:\mathbf 1 &\rightarrow a^*\otimes a
	\end{align*}
	such that
	\begin{align}
	\lambda_a \circ (ev_a\otimes Id_a)\circ \alpha_{a,a^*,a}\circ (Id_a \otimes coev_a)\circ \rho_a^{-1}&=Id_a \text{ and }\\
	\rho_{a^*} \circ (Id_{a^*}\otimes ev_{a^*})\circ \alpha_{a^*,a,a^*}^{-1} \circ (coev_{a^*}\otimes Id_{a^*}) \circ \lambda_{a^*}^{-1} &=Id_{a^*}.
	\end{align}
\end{enumerate}
\end{definition}
We denote by $\mathcal K_0(\mathcal C)$ the Grothendieck ring of $\mathcal C$.
This is a based ring as defined in \ref{def:basedring} with basis elements corresponding to equivalence classes of simple objects $a,b,\ldots \in \mathcal C_0$ and multiplication induced from $\otimes$ via $N_{a_1\ldots a_n}^b=dim(\mathcal C(a_1\otimes\ldots \otimes a_n, b))$.
We say that $\mathcal C$ is multiplicity free if $\mathcal K_0(\mathcal C)$ is multiplicity free.
Given two fusion categories $\mathcal C$ and $\mathcal D$ we say that they are {\em Grothendieck equivalent} if and only if $\mathcal K_0(\mathcal C)\cong \mathcal K_0(\mathcal D)$.
We say that a based ring $(R,B)$ {\em admits categorification} if there exists a fusion category $\mathcal C$ such that $(R,B)\cong\mathcal K_0(\mathcal C)$.

\begin{definition}
Let $\mathcal C$ be a fusion category and $\epsilon$ be a natural isomorphism $\epsilon:**\rightarrow Id$ of the double dual and identity functors.
For all equivalence classes of simple objects $a$ of $\mathcal C$ this gives us a morphism $\epsilon_a : a^{**} \rightarrow a$.
From this we can define the left and right {\em quantum trace} of a morphism $f$ on $a$,

\begin{align}
tr_l(f) &=ev_{a}\circ (f \otimes Id_{a^*})\circ ((\epsilon_a)^{-1} \otimes Id_{a^*})\circ coev_{a^*} \label{eq:trl}\\
tr_r(f) &= ev_{a^*}\circ (Id_{a^*}\otimes \epsilon_a)\circ (Id_{a^*}\otimes f)\circ coev_a \label{eq:trr}
\end{align}

where $ev_{\_}$ and $coev_{\_}$ are the evaluation and coevaluation maps.
A natural isomorphism $\epsilon$ such that $\epsilon_{a \otimes b} \cong \epsilon_a \otimes \epsilon_b$ is called a {\em pivotal isomorphism}.
A {\em pivotal fusion category} is a fusion category equipped with a pivotal isomorphism.
\end{definition}

For every isomorphism class of simple objects $a$ in a pivotal fusion category $\mathcal C$, one defines the left quantum dimension $q_l(a)$ and the right quantum dimension $q_r(a)$ as the quantum traces of the identity morphism on $a$.
A {\em spherical fusion category} is a pivotal fusion category such that the left and right quantum dimensions of all objects coincide.
In this case we refer simply to the quantum dimension of an object $a$ as $q_a$.

For a spherical fusion category $\mathcal C$ we define the matrix $D$ to be the diagonal matrix with entries $q_a$ for all equivalence classes $a$. 
We also define the {\em categorical quantum dimension} $q(\mathcal C)= \sum_{a} (q_a)^2$.
A spherical fusion category is called {\em pseudo-unitary} iff $q(\mathcal C)=FP(\mathcal K_0(\mathcal C))$ and there exists a canonical choice of spherical structure such that $q_a = FP(a)$ for all $a$.

There is also the stronger notion of a {\em unitary fusion category}.
\begin{definition}
A {\em conjugation} on a fusion category is a family of conjugate linear maps $\mathcal C(x,y)\rightarrow \mathcal C(y,x)$, $f\rightarrow \bar f$ satisfying
\begin{displaymath}
\begin{array}{lcr}
\bar{\bar f}=f, & \overline{f\otimes g}=\bar f \otimes \bar g, &\text{ and }\overline{f\circ g}=\bar g\circ \bar f.
\end{array}
\end{displaymath}
A fusion category is {\em unitary} if $f=0$ whenever $\bar f\circ f=0$\cite{MR1966524}.
In this case then, all of the $\mathcal C(a\otimes b, c)$ spaces are Hilbert spaces \cite{MR2200691}. Unitary implies $q(a)=FP(a)$ and so then pseudo-unitary and spherical.
\end{definition}

\begin{definition}
Let $\mathcal C$ be a fusion category. 
A braiding on $\mathcal C$ is a family of natural isomorphisms $c_{x,y}: y\otimes x \rightarrow x \otimes y$ satisfying the hexagon equations as in \cite{MR1797619}.
A {\em braided fusion category} is a fusion category equipped with a braiding.
\end{definition}

For braided fusion categories there is a canonical natural isomorphism
\begin{equation}\label{eq:psi}
\psi_a= \rho_{a^{**}} \circ (Id_{a^{**}}\otimes ev_a)\circ \alpha_{a^{**},a,a^*}^{-1}\circ(c_{a,a^{**}}\otimes Id_a^*)\circ\alpha_{a,a^{**},a^*}\circ (Id_a \otimes coev_{a^*})\circ \rho_a^{-1}.
\end{equation}

A {\em balanced fusion category} is a braided pivotal fusion category with balancing $\theta_a= \epsilon_a \circ \psi_a$.
The trace of this morphism, by abuse of notation, will also be denoted $\theta_a$ and the diagonal matrix with entries $\theta_a$ will be called the $T$-matrix.
A {\em ribbon fusion category} is a balanced fusion category that is also spherical.

There also exist a family of invariants $S_{ab}$ corresponding to the evaluations of Hopf links colored by each pair of equivalence classes $(a,b)$.
The matrix with entries $S_{ab}$ is called the $S$-Matrix and a {\em modular category} is a ribbon fusion category with non-degenerate $S$-matrix.
In the case that $\mathcal C$ is modular than all $\mathcal \theta_a$ are roots of unity.
The matrices $\frac{1}{q(\mathcal C)} S$ and $T=diag(\theta_a)$ give a representation of the modular group $SL(2,\Z)$.
It was believed that the pair $(S,T)$ uniquely determine the modular category and typically what one does in classifying modular categories (e.g. \cite{MR2544735},\cite{2013arXiv1310.7050B}) is enumerate the admissible pairs $(S,T)$.
However, it has since then become clear that $(S,T)$ do not suffice to uniquely determine the modular category \cite{mignard}.

\subsection{Fusion and Modular Systems}

An important question is how one goes about constructing fusion categories.
One way to do so\cite{2013arXiv1305.2229D} is through a collection of numbers called a fusion system.

\begin{definition}
A {\em Fusion System} $(L,N,F)(\mathcal C)$ over $k$ consists of
\begin{enumerate}
	\item A set of labels $L$ containing an element called $\mathbf 1$.
	\item An involution $*:L\rightarrow L$ such that $\mathbf 1^* = \mathbf 1$.
	\item A set map $N:L\times L \times L \rightarrow \{0,1\}$ (written $N_{ab}^c$ for $N(a,b,c)$) satisfying
	\begin{align}
		\delta_a^b &= N_{a\mathbf 1}^b=N_{\mathbf 1 a}^b=N_{ab^*}^\mathbf 1=N_{b^*a}^\mathbf 1\label{eq:N1}\\
		N_{abc}^d&:=\sum_{e}N_{ab}^e N_{ec}^d = \sum_f N_{af}^d N_{bc}^f\label{eq:N2} \\
	\intertext{We will define $\Gamma(L,N)=\{\gamma_{ab}^c \vert N(a,b,c)=1\}$.}
	\intertext{\item For every quadruple $a,b,c,d\in L$, an invertible $N_{abc}^d \times N_{abc}^d$ matrix $F_{abc}^d$ with entries satisfying}
		F_{a\mathbf 1 b}^{c;ab} &= N_{ab}^c \label{eq:F1} \\
		F_{a a^{*} a}^{a;1 1} &\neq 0 \label{eq:F2} \\
		\sum_{h} F_{abc}^{h;fg}F_{agd}^{e;hi}F_{bcd}^{i;gj} &= F_{fcd}^{e;hj}F_{abj}^{e;fi}\label{eq:F3}
	\end{align}
\end{enumerate}
The entries of matrix $F_{abc}^{d;ef}$ are referred to as $6J$-symbols.
Define $Aut(N)$ to be the group of permutations on $L$ such that for all $\nu$ in $Aut(N)$ $$N_{ab}^c =N_{\nu(a)\nu(b)}^{\nu(c)}.$$
This then extends to an action on $F$ via 
\begin{align}\label{eq:autaction}
(F_{abc}^{d;ef})^{\nu}:=F_{\nu(a)\nu(b)\nu(c)}^{\nu(d);\nu(e)\nu(f)}.
\end{align}
\end{definition}

Central to our analysis is the ability to interpolate between fusion categories over $\C$ and fusion systems over $\C$.
Given a fusion system $(L,N,F)$ over $\mathcal C$, one can construct a fusion category $\mathcal C(L,N,F)$.
Given a fusion category $\mathcal C$ over $\C$, a fusion system over $\C$, denoted $(L,N,F)(\mathcal C)$, can be extracted such that $\mathcal C((L,N,F)(\mathcal C))\cong \mathcal C$ as fusion categories.
For our construction, this is proven in \cite{2013arXiv1305.2229D} as Proposition 3.7. 
A similar theorem also appears in \cite{MR1659954}.

It is by analyzing a family of fusion systems for $\sost{2p+1}$ Grothendieck rings that we develop our classification.
To do this, we will utilize the following:

\begin{proposition}
Two fusion categories $\mathcal C$ and $\mathcal D$ are Grothendieck equivalent if and only if given any two fusion systems $(L,N,F)$ and $(L',N',F')$ extracted from them there exists a bijection $f:L\rightarrow L'$ such that for all $\nu\in Aut(N)$ there exists $\nu'\in Aut(N')$ such that $f\circ \nu = \nu' \circ f$.
\end{proposition}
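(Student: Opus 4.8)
The plan is to reduce the statement to the purely combinatorial correspondence of Definition~\ref{def:basedring} between based-ring isomorphisms and fusion-rule--preserving bijections of bases. First I would record the two facts that make this legitimate: if $(L,N,F)$ is a fusion system extracted from $\mathcal C$, then $L$ is (in bijection with) the set of isomorphism classes of simple objects of $\mathcal C$ and $N$ is the table of structure constants of $\mathcal K_0(\mathcal C)$ --- likewise $(L',N',F')$ for $\mathcal D$ --- and any two fusion systems extracted from the same category share the pair $(L,N)$ up to a relabelling lying in $Aut(N)$, the residual freedom being a gauge transformation of $F$ that plays no role here. Accordingly I read ``bijection $f:L\to L'$'' in the sense of Definition~\ref{def:basedring}, i.e. $f(\mathbf 1)=\mathbf 1$ and $N_{ab}^c=N'_{f(a)f(b)}^{f(c)}$ for all $a,b,c$; this is the part of the hypothesis carrying content, since without it the automorphism clause can be vacuous (for instance whenever $Aut(N)$ is trivial).

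For the forward direction I would argue as follows. If $\mathcal C$ and $\mathcal D$ are Grothendieck equivalent then $\mathcal K_0(\mathcal C)\cong\mathcal K_0(\mathcal D)$ as based rings, so Definition~\ref{def:basedring} supplies a bijection $f:L\to L'$ preserving the fusion rules. Given $\nu\in Aut(N)$, set $\nu':=f\circ\nu\circ f^{-1}\in \mathrm{Sym}(L')$; then for all $x,y,z$ the chain $N'_{\nu'(x)\nu'(y)}^{\nu'(z)}=N_{\nu f^{-1}(x)\,\nu f^{-1}(y)}^{\nu f^{-1}(z)}=N_{f^{-1}(x)\,f^{-1}(y)}^{f^{-1}(z)}=N'_{xy}^{z}$ --- first and last steps because $f$ preserves $N$ and $\nu'=f\nu f^{-1}$, the middle step because $\nu\in Aut(N)$ --- shows $\nu'\in Aut(N')$, and $f\circ\nu=\nu'\circ f$ holds by construction. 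Replacing the extracted systems changes $(L,N)$ only by relabellings $\mu\in Aut(N)$, $\mu'\in Aut(N')$, and conjugating $f$ by these absorbs the change, so the property is independent of the chosen fusion systems.

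For the converse I would simply observe that a bijection $f:L\to L'$ of the stated kind is exactly a based-ring isomorphism $\mathcal K_0(\mathcal C)\to\mathcal K_0(\mathcal D)$, whence $\mathcal C$ and $\mathcal D$ are Grothendieck equivalent; the automorphism clause is then automatically fulfilled by the forward argument, so the two sides are genuinely equivalent and nothing extra is required. The step I expect to be the real obstacle is not either implication in isolation but the bookkeeping that makes the assertion well posed: one must check that ``there exists $f$ with this property'' depends only on $\mathcal C$ and $\mathcal D$, not on the particular arithmetic presentations, which is precisely where the interaction between $Aut(N)$-relabellings of $L$ and gauge transformations of the $F$-data must be handled with care, and where one must verify the automorphism clause is read strongly enough (together with fusion-rule preservation) to avoid the vacuous cases noted above.
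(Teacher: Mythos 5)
Your proof is correct and follows essentially the same route as the paper's (one-line) argument: both reduce the claim to the fact from Definition~\ref{def:basedring} that a based-ring isomorphism of Grothendieck rings is determined by, and is the same thing as, a fusion-rule-preserving bijection of the bases. Your explicit observation that $f$ must be read as preserving $N$ (since otherwise the automorphism clause can be vacuous, e.g.\ when $Aut(N)$ is trivial), together with the check that the condition is independent of the chosen fusion systems, supplies detail that the paper's terse proof glosses over but does not change the underlying approach.
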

\begin{proof}
If $K_0(\mathcal C)\cong K_0(\mathcal D)$ there exists a based ring isomorphism in the sense of \cite{MR1976459} 2.1.iv, and this is determined by its action on basis elements.
\end{proof}

This allows us to specify a common basis for $K_0(\mathcal C)$ from which to work.

There also exists arithmetic data for pivotal structures.
\begin{proposition}
Let $(L,N,F)$ be a fusion system extracted from a fusion category $\mathcal C$ and define the set $\{\epsilon_a\}_{a\in L}$.
Pivotal structures on $\mathcal C$ are in 1-1 correspondence with solutions to
\begin{align}
	\epsilon_{c}^{-1} \epsilon_a \epsilon_b &= F_{abc^*}^{\mathbf 1; c a^*}F_{b c^* a}^{\mathbf 1; a^* b^*}F_{c^* a b}^{\mathbf 1; b^* c}.\label{eq:P1}
\end{align}
\end{proposition}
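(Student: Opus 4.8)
The statement to prove is that pivotal structures on $\mathcal{C}$ correspond bijectively to solutions $\{\epsilon_a\}_{a\in L}$ of the scalar equation \eqref{eq:P1}. The plan is to unwind the definition of a pivotal isomorphism in terms of the chosen fusion system, turning the coherence condition $\epsilon_{a\otimes b}\cong \epsilon_a\otimes\epsilon_b$ into a concrete equation on the scalars $\epsilon_a:a^{**}\to a$. First I would recall that in a fusion system $(L,N,F)$ extracted from $\mathcal{C}$, the double dual functor $**$ is canonically identified with a functor whose action on simple objects can be computed from the $F$-symbols: the canonical evaluation/coevaluation maps together with the associators \eqref{eq:F3} assemble into a natural isomorphism $a^{**}\cong a$ (after a choice), and a natural isomorphism $\epsilon:**\to Id$ is then the datum of one scalar $\epsilon_a\in\C^\times$ for each simple object $a$ (naturality is automatic since $\mathcal{C}(a,a)\cong\C$). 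So the content is entirely in the tensor-compatibility condition.

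Next I would spell out $\epsilon_{a\otimes b}\cong \epsilon_a\otimes\epsilon_b$ on each simple summand $c$ of $a\otimes b$ with $N_{ab}^c=1$. Writing both sides as morphisms $c^{**}\to c$ and decomposing through the identifications of $(a\otimes b)^{**}$ with $a^{**}\otimes b^{**}$ — which is built from $ev_a$, $ev_b$, their coevaluations, and the associators — one obtains a scalar identity whose coefficient is a product of three $F$-matrix entries coming from the three ways the rigidity structure threads $a$, $b$, $c^*$ around the relevant pentagon. Being explicit, the identification $(a\otimes b)^{**}\cong a^{**}\otimes b^{**}$ is governed by rigidity for the object $a\otimes b$, which in the fusion-system language is the composite controlled by $F_{abc^*}^{\mathbf 1;ca^*}$, $F_{bc^*a}^{\mathbf 1;a^*b^*}$, and $F_{c^*ab}^{\mathbf 1;b^*c}$; equating the scalar action of $\epsilon_{a\otimes b}$ on the $c$-summand with that of $\epsilon_a\otimes\epsilon_b$ gives exactly \eqref{eq:P1}, namely $\epsilon_c^{-1}\epsilon_a\epsilon_b$ equal to that triple product. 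Conversely, any assignment $\{\epsilon_a\}$ solving \eqref{eq:P1} can be used to define a natural isomorphism $**\to Id$ by the same scalars, and the computation above run backwards shows it satisfies the tensor-compatibility, hence is pivotal; the two passages are mutually inverse by construction.

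The main obstacle I expect is the bookkeeping in the second step: correctly tracking which $F$-symbol (and whether $F$ or $F^{-1}$, and in which normalization of $ev/coev$ fixed by \eqref{eq:F1}–\eqref{eq:F2}) appears when one rewrites the three rigidity "zig-zags" for $a$, $b$, and $c^*$ in terms of the $6J$-symbols, and checking that the $\mathbf 1$-type $F$-symbols $F_{abc^*}^{\mathbf 1;\,\cdot\,}$ that appear are exactly the ones encoding the associativity constraint relating $(a\otimes b)\otimes c^*\to\mathbf 1$ with $a\otimes(b\otimes c^*)\to\mathbf 1$. One should also verify that the equation is independent of the remaining gauge freedom in the fusion system in a way consistent with the claimed 1–1 correspondence (i.e.\ that a gauge transformation of $F$ rescales both sides of \eqref{eq:P1} compatibly), though since the proposition is phrased for a fixed extracted $(L,N,F)$ this is a consistency check rather than part of the proof. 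The rest — naturality, invertibility of each $\epsilon_a$, and the inverse construction — is formal once the key scalar identity is in hand.
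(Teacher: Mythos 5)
The paper does not actually prove this proposition; it defers entirely to \cite[Proposition 3.12]{2013arXiv1305.2229D}, so there is no internal argument to compare yours against. Your outline is the standard route and, as far as the strategy goes, it is the right one: reduce a natural isomorphism $\epsilon:**\to Id$ to a tuple of nonzero scalars indexed by simple objects (naturality being automatic from $\mathcal C(a,a)\cong\C$ and semisimplicity), and then translate the monoidal compatibility $\epsilon_{a\otimes b}\cong\epsilon_a\otimes\epsilon_b$, evaluated on each simple summand $c$ with $N_{ab}^c\neq 0$, into the scalar identity \eqref{eq:P1}.

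The one substantive reservation is that the heart of the proposition --- that the comparison of $\epsilon_{a\otimes b}$ with $\epsilon_a\otimes\epsilon_b$ on the $c$-summand produces \emph{precisely} the product $F_{abc^*}^{\mathbf 1;\,c a^*}F_{b c^* a}^{\mathbf 1;\,a^* b^*}F_{c^* a b}^{\mathbf 1;\,b^* c}$ --- is asserted rather than derived: you say the identification $(a\otimes b)^{**}\cong a^{**}\otimes b^{**}$ is ``controlled by'' these three symbols, which is naming the answer, not computing it. The computation requires fixing the skeletal duality data normalized by \eqref{eq:F1}--\eqref{eq:F2}, writing the canonical map $a^{**}\otimes b^{**}\to(a\otimes b)^{**}$ as a composite of evaluations, coevaluations, and associators, and tracking the three resulting zig-zags through the $6J$-symbols; this is exactly the bookkeeping you flag as the ``main obstacle,'' and it is where all the content lives (including getting the right orientation of each $F$ versus $F^{-1}$, which your sketch leaves open). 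Since the paper itself supplies only a citation, your proposal is no less complete than the text, but as a standalone proof it needs that computation carried out --- or an explicit appeal to the cited reference --- before the triple product in \eqref{eq:P1} can be claimed. Your closing remark about gauge covariance of both sides of \eqref{eq:P1} is a correct and worthwhile consistency check, though, as you note, not logically required for a fixed extracted fusion system.
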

This is proven in \cite[Proposition 3.12]{2013arXiv1305.2229D}.

Given a fusion category $\mathcal C$ with pivotal structure $\epsilon$, if
it is a spherical structure then there exists a fusion system $(L,N,F)(\mathcal C)$ such that all $\epsilon_a=\pm 1$.
By skeletalizing the rigidity conditions and using the choice of basis as in \cite[Lemma 3.4]{2013arXiv1305.2229D}, the quantum dimensions can be computed from $F$ and $\epsilon$ as 
\begin{align}\label{eq:qdims}
q_l(a) &= \epsilon_a(F_{a^* a a^*}^{a^*; 1 1})^{-1}\text{ and } & q_r(a)&=(\epsilon_a F_{a a^* a}^{a; 1 1})^{-1}
\end{align}

If $\mathcal C$ admits the structure of a unitary fusion category, then there exists a fusion system $(L,N,F)(\mathcal C)$ such that all $F_{abc}^d$ matrices are unitary.
Conversely (See \cite{MR1659954}, Section 4), if given $(L,N)$ there exists a solution to \eq{F1}-\eq{F3} such that all $F_{abc}^d$ matrices are unitary, then $\mathcal C(L,N,F)$ admits a unitary structure.

\begin{definition}
A {\em Modular System} $(L,N,F,R,\epsilon)$ is a fusion system $(L,N,F)$ such that $N_{ba}^c = N_{ab}^c, \forall \gamma_{ab}^c \in \Gamma(L,N)$, $\epsilon=\{\epsilon_a\}_{a\in L}$ is a solution to \eq{P1}, and $\{R_{ab}^c \vert\gamma_{ab}^c \in \Gamma(L,N) \}$ is a collection of numbers satisfying
\begin{align}
	R_{ac}^g F_{acb}^{d; g f} R_{bc}^f &= \sum_{e\in L} F_{cab}^{d; ge} R_{ec}^d F_{abc}^{d; ef} \label{eq:R1} \\
	(R_{ac}^g)^{-1} F_{acb}^{d; g f} (R_{bc}^f)^{-1} &= \sum_{e\in L} F_{cab}^{d; ge} (R_{ec}^d)^{-1} F_{abc}^{d; ef} \label{eq:R2} 
\end{align}
such that the matrix with entries
\begin{equation}\label{eq:smat}
\hat S_{ab} = \sum_{c\in L} F_{ab^* b}^{a; c 1}R_{a b^*}^c R_{b^* a}^c F_{ab^* b}^{a; 1 c}
\end{equation}
is invertible.

\end{definition}

Similar to \ref{eq:autaction}, we can define the action of $\nu\in Aut(N)$ on $R$ via
\begin{align}\label{eq:autactionr}
(R_{ab}^c)^{\nu}:=R_{\nu(a)\nu(b)}^{\nu(c)}.
\end{align}

The matrix $\hat S$ is related to the matrix $S$ via $S = D \hat S D$.
Additionally, we can compute the twists as
\begin{equation}\label{eq:twist}
\theta_a = (q_a)^{-1}\sum_{c\in L} q_c R_{aa}^c
\end{equation}
obtained by taking the trace of the morphism \eq{psi} above\cite{MR2200691}.
Finally, we give a formula for the topological central charge $c_{\rm top}$ (see \cite{MR1104414}).
We introduce
\begin{equation}\label{eq:pplusminus}
p_\pm = \sum_{a\in L} \theta_a^\pm q_a^2 \ ,
\end{equation}
from which one obtains the topological central charge, defined modulo eight,
\begin{align}\label{eq:centralcharge}
 e^{2\pi \imath c/8} &= \frac{p_+}{\sqrt{\sum_a q_a^2}}
&
c_{\rm top} &= c \mod 8 \ .
\end{align}

Note that  through teasing out definitions, all of the information above can be written in terms of $F$, $R$, and $\epsilon$.
Since in our case we have all of the $F$'s, $R$'s, and $\epsilon$'s, so we can simply compute $(S,T)$ for the given the arithmetic data.

\section{Monoidal Equivalence and Gauge Invariants}\label{s:invs}
Given two fusion (modular) categories $\mathcal C$ and $\mathcal D$, a central question is one of whether or not they are equivalent in some suitable sense.
The strongest of these is {\em monoidal equivalence}.
\begin{definition}
$\mathcal C$ and $\mathcal D$ are said to be {\em monoidally equivalent} iff there exists a pair of monoidal functors(see \cite{MR1321145}) $\mathcal F:\mathcal C\rightarrow \mathcal D$ and $\mathcal G:\mathcal C\rightarrow \mathcal D$ such that $\mathcal F \circ \mathcal G$ and $\mathcal G\circ \mathcal F$ are naturally monoidal isomorphic to the identity functors on $\mathcal C$ and $\mathcal D$ respectively.
If $\mathcal C$ and $\mathcal D$ are braided monoidal, then they are {\em braided monoidally equivalent} if $\mathcal F$ and $\mathcal G$ are braided monoidal functors.
\end{definition}

A family of categories $\mathcal C_1,\ldots, \mathcal C_n$ which are monoidally equivalent are automatically Grothendieck equivalent.
A difficult and important question goes the other direction: Given a Grothendieck equivalence class of categories, how many monoidal equivalence classes are there?
That there are finitely many is known as {\em Ocneanu rigidity}\cite{MR2183279}.

Given that we can describe fusion categories arithmetically, there is the natural question of whether or not an equivalence between them can be described arithmetically.
The action $Aut(N)$ on a fusion system $(L,N,F,R,\epsilon)$ is given by \eq{autaction} and it was shown in \cite{2013arXiv1305.2229D} that this gives an object permuting monoidal functor.
We say that two fusion systems $F,F'$ are {\em permutation equivalent} if there exists $\nu \in Aut(N)$ such that $F'=F^\nu$.
The other operation one can perform on $(L,N,F)$ to obtain equivalent categories is given by a {\em gauge transformation}.
This corresponds to a change of basis on the $\mathcal C(a\otimes b,c)$ homspaces and can be represented arithmetically as follows:

Define $G$ to be the set of functions $g:\Gamma(L,N)\rightarrow k^\times$ fixing the notation $g_{ab}^c:= g(a,b,c)$ and $g^{ab}_c:=(g(a,b,c))^{-1}$.
Then given a solution $F$ to \eq{F1}-\eq{F3} and $g\in G$, one obtains another solution $F^g$ via the equations
\begin{align}\label{eq:fgauge}
(F^g)_{abc}^{d;ef}&:=(g_{ab}^e)(g_{ec}^d) F_{abc}^{d;ef} (g^{bc}_f)(g^{af}_d)
\end{align}
and two solutions $F$ and $F'$ are gauge equivalent if and only if there exists $g\in G$ such that $F'=g$.
\eq{fgauge} can be used to construct an object fixing monoidal functor.
Given $\mathcal C$, it is clear from the definition that any two fusion systems extracted from $\mathcal C$ are gauge equivalent.
Monoidal equivalence of fusion categories $\mathcal C$ and $\mathcal D$ is then determined by the existence of a permutation equivalence together with a gauge equivalence.

\begin{remark}
The above extends readily to braided monoidal equivalence between modular categories.
Given $R$ and $R'$, permutation equivalence requires the imposition of the extra condition $R=R^\nu$ as defined in \eq{autactionr}.
Gauge equivalence requires the imposition that $R'=R^g$ as given by
\begin{align}\label{eq:rgauge}
(R^g)&:=(g_{ab}^c) R_{ab}^c (g^{ba}_c).
\end{align}
\end{remark}

Given a set of fusion(modular) systems, determining their monoidal equivalence is not an easy task in general.
However, in \cite{2015arXiv150903275H} a construction is given for invariants which can distinguish gauge classes of fusion categories (with or without multiplicity).
The key to this is noting that for a given $(L,N)$ solutions to the equations \eq{F1}-\eq{F3} define an algebraic scheme $X(L,N)$ in variables $\Phi(L,N)$.
$F$ can then be interpreted as an algebra homomorphism  $F:\C[\Phi(L,N)]\rightarrow \C$ with $F_{abc}^{d;ef}=F(\Phi_{abc}^{d;ef})$ for $\Phi_{abc}^{d;ef}\in \Phi$.
From here, on an open subset $U$ of $X(L,N)$, it is straightforward to extend $F$ to $\mathcal O_X(U)$, the ring of regular functions on $X$ defined at $U$.

Gauge equivalence classes correspond to $G$-orbits in $X(L,N)$ and monoidal equivalence classes correspond to $Aut(N)\ltimes G$-orbits in $X(L,N)$.
Given this, one can leverage the geometric invariant theory of \cite{MR1748380} to obtain the following result:

\begin{theorem}\label{thm:invs}
Fix $(L,N)$.
Let $\mathscr N$ be the number of gauge equivalence classes of fusion categories and $\mathscr M$ be the number monoidal equivalence classes of fusion categories.
Then there exist $P\leq{\mathscr N \choose 2}$ $G$-invariant rational monomials with
\begin{align}\label{eq:invmons}
m_i&=(\phi_{i,1})^{k_{i,1}}\ldots(\phi_{i,j})^{k_{i,j}}, &\text{ with }& &\phi&\in \Phi(L,N) &\text{ and }& &k&\in\Z
\end{align}
such that for fusion systems $F$ and $F'$, $F$ and $F'$ are gauge equivalent if and only if
\begin{align}
F(m_i)&=F'(m_i),& i=1,\ldots P.
\end{align}
Similarly, there exist $Q\leq{\mathscr M\choose 2}$ $Aut(N)$-linear combinations $l_1,\ldots, l_q$ of $G$-invariant monomials as in \eq{invmons} such that $F$ and $F'$ are monoidally equivalent if and only if
\begin{align}
F(l_j)&=F'(l_j),& j=1,\ldots Q.
\end{align}
$Aut(N)$-linear combinations of $G$-invariants will be called $Aut(N)\ltimes G$-invariants for obvious reasons.
\end{theorem}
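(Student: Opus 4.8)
The plan is to read this off from geometric invariant theory for a torus action, following \cite{2015arXiv150903275H}. First I would record the shape of the action: $G=\mathrm{Map}(\Gamma(L,N),k^{\times})$ is an algebraic torus, and by \eqref{eq:fgauge} it acts \emph{diagonally} on the affine space $\mathbb{A}^{\Phi(L,N)}\supseteq X(L,N)$, the coordinate $\Phi_{abc}^{d;ef}$ spanning a $G$-eigenline with character $g\mapsto g_{ab}^{e}\,g_{ec}^{d}\,g^{bc}_{f}\,g^{af}_{d}$. A gauge transformation only rescales nonzero entries, so the support $\mathrm{supp}(F)=\{\,\Phi_{abc}^{d;ef}:F(\Phi_{abc}^{d;ef})\neq 0\,\}$ is a discrete gauge invariant; I would therefore stratify $X(L,N)$ by support. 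On a fixed stratum the live coordinates take values in $k^{\times}$, so there $G$ acts through a subtorus $G'$ of $(k^{\times})^{\mathrm{supp}}$ by translation, the ring of $G$-invariant regular functions is spanned by the $G$-invariant Laurent monomials of \eqref{eq:invmons} (the kernel lattice of the weight map $\mathbb{Z}^{\Phi(L,N)}\to\widehat{G}$), and these are exactly the coordinate functions on the quotient torus $(k^{\times})^{\mathrm{supp}}/G'$, which separates its own points.

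Next I would invoke Ocneanu rigidity \cite{MR2183279} to fix representatives $F_{1},\dots,F_{\mathscr N}$ of the gauge classes. For each pair $i\neq j$ the points $F_{i}$ and $F_{j}$ are not gauge equivalent, so (after passing to the relevant stratum, and noting two systems with different supports are \emph{a fortiori} inequivalent) their images in the appropriate quotient torus differ, and some $G$-invariant monomial $m_{ij}$ has $F_{i}(m_{ij})\neq F_{j}(m_{ij})$. Let $m_{1},\dots,m_{P}$ be this collection; then $P\leq\binom{\mathscr N}{2}$. For arbitrary $F,F'$: the ``only if'' direction is immediate since every $m_{i}$ is $G$-invariant, hence constant on gauge orbits; for ``if'', were $F,F'$ to agree on all $m_{i}$ yet lie in distinct classes $i,j$, then $m_{ij}$ would tell them apart, a contradiction.

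For the monoidal statement I would use that $Aut(N)$ is finite, acts on $X(L,N)$ by permuting coordinates via \eqref{eq:autaction}, and normalizes $G$ (this is the content of the semidirect product $Aut(N)\ltimes G$), hence permutes the $G$-invariant monomials among themselves and permutes the support strata. The monoidal classes are the $Aut(N)\ltimes G$-orbits, and since $Aut(N)$ is finite and $\C$ has characteristic zero, averaging shows the ring of $Aut(N)\ltimes G$-invariants is spanned by the $Aut(N)$-orbit sums of $G$-invariant monomials; these are the claimed $Aut(N)$-linear combinations. The same pairwise argument, now over the $\mathscr M$ monoidal classes, selects one such orbit sum $l_{j}$ per pair and produces $l_{1},\dots,l_{Q}$ with $Q\leq\binom{\mathscr M}{2}$ and the stated property.

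The main obstacle I anticipate is the non-closedness of $G$-orbits in $X(L,N)$: a torus separates only the closed orbits of a general affine variety, so the argument cannot simply quote the naive quotient $X(L,N)/\!\!/ G$ and must instead be organized around the finitely many fusion-system points and the support stratification sketched above --- in particular, pinning down a clean statement that covers two systems with differing supports, and checking how $Aut(N)$ shuffles strata, is the delicate bookkeeping. A secondary point is justifying that finitely many $Aut(N)$-linear combinations of monomials (rather than arbitrary invariant polynomials) already separate $Aut(N)$-orbits, which is where one uses that the invariant ring is spanned by orbit sums of the $G$-invariant monomials.
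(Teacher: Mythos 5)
Your proposal is correct in substance and follows essentially the same geometric-invariant-theory route the paper relies on: the paper does not prove this statement itself but cites it as Theorem 1.2 of \cite{2015arXiv150903275H}, remarking only that the argument rests on $G$ being reductive and the $G$-orbits in $X(L,N)$ being closed. The one genuine divergence is your treatment of that second point. You flag non-closedness of $G$-orbits as the main obstacle and route around it with a stratification by support; but in this setting closedness is not an obstacle, it is the key lemma. Ocneanu rigidity gives not merely finiteness of the set of gauge classes but vanishing of the relevant deformation cohomology, so each $G$-orbit is open in $X(L,N)$; a variety that is a finite union of open orbits has every orbit closed (each is the complement of the union of the others), and invariants of the reductive group $G$ then separate the orbits by standard GIT. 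Your support-stratification argument is a valid, more hands-on substitute --- and it has the side benefit of making honest sense of the rational monomials in \eqref{eq:invmons}, whose negative exponents are only defined where the corresponding coordinates are nonvanishing, a point the closed-orbit formulation leaves implicit --- but it is bookkeeping the cited argument avoids. The remaining steps in your sketch (one separating invariant per unordered pair of the finitely many classes, giving $P\leq{\mathscr N \choose 2}$; averaging over the finite group $Aut(N)$ to express $Aut(N)\ltimes G$-invariants as $Aut(N)$-linear combinations of $G$-invariant monomials, giving $Q\leq{\mathscr M \choose 2}$) agree with the intended proof.
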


This is Theorem 1.2 of \cite{2015arXiv150903275H}.
It is straightforward to extend this construction to classifying modular systems $(L,N,F,R,\epsilon)$.
One goes through the same arguments of using the scheme defined from \eq{F1}-\eq{F3},\eq{P1}-\eq{R2}, and noting that they have the same essential properties ($G$ is reductive, orbits are closed).

In the fusion case, the choice of $Aut(N)\ltimes G$-invariants is not necessarily apparent a priori.
However, as we will show in section \ref{s:equivs}, there is an easy choice which works for our $\sost{2p+1}$ categories.

For modular categories there are invariants which immediately present themselves. Even though we previously mentioned that the pair $(S,T)$ is not strong enough for classification \cite{mignard}, $S$ and $T$ prove to be useful.
The quantum dimensions as defined in \ref{eq:qdims} are categorical invariants sufficient to distinguish between spherical structures, but the first row and column of the matrix $ S$ are essentially these numbers.
It has also been previously mentioned that the $q's$, $S's$, and $T's$ can all be written using $F's$, $R's$, and $\epsilon's$.
Thus they all define $G$-invariant regular functions on $X$.

\section{$\sost{2p+1}$ Fusion Systems}\label{s:sost}
In the following we present arithmetic data for the $F$ and $R$ matrices which makes much of the structure for our categories apparent.
We also provide explicit computations of the relevant categorical quantities.

There are several contexts in which the Grothendieck rings for our categories naturally arise, though two primary sources both come from the study of Lie algebras.
As should be apparent from our notation, one of these is as Grothendieck rings for the categories of representations of the (untwisted) affine Kac-Moody algebras $B_p^{(1)}$ (i.e. $\mathfrak{so}(2p+1)$) with highest integral weight 2.
As determined by \cite{MR1384612}, these are equivalent (as modular categories) to the semi-simplification (\`{a} la \cite{MR1182414}) of the representation category for $U_q(B_p)$ with $q=\mathbf e^{\frac{\pi \imath}{2p+1}}$.
The affine Kac-Moody and quantum group constructions for the Grothendieck rings can be found in \cite{MR1887583} and \cite{MR2263097} respectively.

Another source for our fusion rules comes from Tambara-Yamagami categories as follows.
Let $p$ a positive integer, $\chi$ a symmetric bicharacter on $\Z_{2p+1}$ and $\nu=\pm$.
In \cite{MR2587410} it was shown that $\Z_2$-de-equivariantizations of the Tambara-Yamagami category $\mathcal T\mathcal Y(\Z_{2p+1}, \chi, \nu)$ gives rise to an $\sost{2p+1}$ category. 

\subsection{Fusion Rules for $\sost{2p+1}$ categories}\label{ss:FR}
For some $\sost{2p+1}$ category $\mathcal C$, we label the basis elements of $K_0(\mathcal C)$ (i.e. equivalence classes of simple objects) by elements of the set $L=\{\mathbf 1, \epsilon,\phi_i,\psi_{\pm}\}$ with $i=1,\ldots p$.
These have Frobenius-Perron dimensions $\{1,1,2,\sqrt{2p+1}\}$ respectively.
$K_0(\mathcal C)$ is commutative with non-trivial products given by:

\begin{align*}
	\epsilon \otimes \epsilon &\cong \mathbf 1   	& 	\phi_i \otimes \phi_i &\cong \mathbf 1 \oplus \epsilon \oplus \phi_{g(2 i)} 	& 	\psi_{\pm} \otimes \psi_{\pm} &\cong \mathbf 1 \bigoplus_{j=1}^p \phi_j		\\
	\epsilon \otimes \phi_i &\cong \phi_i        	& 	\phi_i \otimes \phi_j &\cong \phi_{g(i-j)} \oplus \phi_{g(i+j)} 	         	&	\psi_{\pm} \otimes \psi_{\mp} &\cong \epsilon \bigoplus_{j=1}^p \phi_j 		\\
	\epsilon \otimes \psi_{\pm} &\cong \psi_{\mp}	&	\phi_i \otimes \psi_\pm &\cong \psi_{\pm} \otimes \psi_{\mp} 
\end{align*}

where $g:\Z_{2p+1} \rightarrow \mathbb G_{2p+1}$ with $\mathbb G_{2p+1} =\{0,\ldots p\}$ is given by $$g(a)=\vert a \vert.$$

\begin{proposition}
Let $K_0(\mathcal C)$ be the Grothendieck ring of a $\sost{2p+1}$ category. Then
\begin{enumerate}
	\item the automorphisms which permute the $\phi_i$ are given by $\mathbb G_{2p+1}^\times := \Z_{2p+1}^{\times}/\langle 1,-1\rangle$,
	\item the automorphisms which permute the $\psi_\pm$ are given by $\Z_2$, 
	\item and the automorphism group of $K_0(\mathcal C)$ is $\mathbb G_{2p+1}^\times \times \Z_2$.
\end{enumerate}
\end{proposition}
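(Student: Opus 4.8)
The plan is to argue entirely inside the based ring $K_0(\mathcal C)$, whose structure constants are fixed by the fusion rules displayed above; no categorical input (pentagon/hexagon) is needed. First I would pin down what any based-ring automorphism $\alpha$ can do. It fixes the unit $\mathbf 1$, and since it conjugates each fusion matrix $N_X$ by a permutation matrix it preserves Frobenius--Perron dimensions. The FP dimensions of $\mathbf 1,\epsilon,\phi_i,\psi_\pm$ are $1,1,2,\sqrt{2p+1}$, and as $2p+1\ge 3$ is odd we have $\sqrt{2p+1}\notin\{1,2\}$; hence $\alpha(\epsilon)=\epsilon$ ($\epsilon$ being the unique non-unit label of dimension $1$), $\alpha$ restricts to a permutation of $\{\phi_1,\dots,\phi_p\}$, and $\alpha$ restricts to a permutation of $\{\psi_+,\psi_-\}$. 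Thus $\alpha$ is determined by a permutation $\sigma$ of $\{1,\dots,p\}$ together with a bit recording whether $\psi_+\leftrightarrow\psi_-$ is swapped. (One checks en route that the anti-involution $*$ is the identity on $L$, so compatibility with duality imposes nothing extra.)

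Next I would identify the possible $\phi$-permutations as $\mathbb G_{2p+1}^\times$. For the ``$\supseteq$'' direction: multiplication by a unit $z\in\Z_{2p+1}^\times$ permutes the nonzero classes and the assignment $\phi_i\mapsto\phi_{g(zi)}$ is compatible with $\phi_i\otimes\phi_j\cong\phi_{g(i-j)}\oplus\phi_{g(i+j)}$ and with $\phi_i\otimes\phi_i\cong\mathbf 1\oplus\epsilon\oplus\phi_{g(2i)}$; extending it by the identity on $\mathbf 1,\epsilon,\psi_\pm$ also preserves $\psi_\pm\otimes\psi_\pm\cong\mathbf 1\oplus\bigoplus_j\phi_j$, $\psi_+\otimes\psi_-\cong\epsilon\oplus\bigoplus_j\phi_j$ and $\phi_i\otimes\psi_\pm\cong\psi_+\oplus\psi_-$, since each of these rules is symmetric in all the $\phi_j$. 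One checks $z\mapsto(\phi_i\mapsto\phi_{g(zi)})$ is a group homomorphism, trivial on $-1$, and injective modulo $\pm1$ (evaluate at $\phi_1$), so its image is a copy of $\mathbb G_{2p+1}^\times$. For ``$\subseteq$'': given the $\phi$-part $\sigma$ of an automorphism $\alpha$, write $\phi_z:=\alpha(\phi_1)$ and prove by induction on $k$ that $\alpha(\phi_k)=\phi_{g(zk)}$. The base case $k=2$ comes from applying $\alpha$ to $\phi_1\otimes\phi_1$; the inductive step comes from applying $\alpha$ to $\phi_1\otimes\phi_k\cong\phi_{g(k-1)}\oplus\phi_{g(k+1)}$ and cancelling the already-known summand $\phi_{g(z(k-1))}$, which is legitimate because $z(k-1)\not\equiv\pm z(k+1)\pmod{2p+1}$ for $1\le k\le p-1$. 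Finally bijectivity of $\sigma$ (together with the fact that the image indices must be nonzero) forces $z\in\Z_{2p+1}^\times$. This proves part (1).

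Then I would handle the $\psi$-part and assemble. The map $\tau$ swapping $\psi_+\leftrightarrow\psi_-$ and fixing everything else preserves all three $\psi$-families (using commutativity for $\psi_+\otimes\psi_-$), so it is an automorphism of order $2$, and it is clearly the only nontrivial automorphism fixing every $\phi_i$; hence the automorphisms fixing the $\phi_i$ form $\Z_2=\langle\tau\rangle$, which is part (2). For part (3): by the first paragraph every $\alpha$ equals $\beta$ or $\tau\beta$, where $\beta$ is the $z$-automorphism having the same $\phi$-part as $\alpha$ and acting trivially on $\psi_\pm$ (such $\beta$ exists by the ``$\supseteq$'' direction above). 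The $z$-automorphisms act trivially on $\psi_\pm$ while $\tau$ acts trivially on the $\phi_i$, so the two subgroups commute and intersect trivially; therefore $\alpha\mapsto(z,\pm)$ is a group isomorphism $\mathrm{Aut}(K_0(\mathcal C))\cong\mathbb G_{2p+1}^\times\times\Z_2$.

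I expect the only genuine work to be the induction in the second paragraph: one must verify that the recursion $\phi_1\otimes\phi_k\cong\phi_{g(k-1)}\oplus\phi_{g(k+1)}$ really does determine $\alpha(\phi_{k+1})$ — i.e.\ that its two summands remain distinct after multiplication by $z$ (equivalently $2\not\equiv0$ and $2k\not\equiv0\pmod{2p+1}$) — and one should separately dispose of the small/degenerate cases (in particular $p=1$, where $\mathbb G_3^\times$ is trivial, and the $k=1$ wrap-around value $g(2)$) so that the induction genuinely reaches all of $\phi_1,\dots,\phi_p$. Everything else is routine checking of the displayed fusion rules.
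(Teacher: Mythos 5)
Your proof is correct and follows essentially the same route as the paper's: constrain automorphisms by Frobenius--Perron dimension (forcing $\epsilon$ to be fixed and the $\phi_i$ and $\psi_\pm$ to be permuted among themselves), identify the $\phi$-permutations with $\mathbb G_{2p+1}^\times$ via the quotient map $g$, and assemble the direct product. If anything, your inductive argument that \emph{every} $\phi$-permutation arises from multiplication by a unit $z$ (with the cancellation justified by $z(k-1)\not\equiv\pm z(k+1)\pmod{2p+1}$) supplies a completeness step that the paper's proof only asserts.
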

\begin{proof}
That the fusion rules are invariant under exchange of $\psi_\pm$ is straight forward.

The automorphisms which permute the $\phi_i$ follow restricting $g$ to $\Z_{2p+1}^\times$ and promoting it to a group homomorphism.
$\Z_{2p+1}^\times$ has even order
\footnote{Specifically the order has to be $\varphi(2 p +1)$ where $\varphi$ is Euler's totient function. 
For power $k$ of prime $r$ $\varphi(r^k)=r^{(k-1)}(r-1)$, which is even. 
That the evaluation of $\varphi$ is even for every $2p+1$ follows from prime factorization and the multiplicative property of $\varphi$.}
and can be represented by integers in $\pm 1,\ldots, \pm p$.
If $i$ is in $\Z_{2p+1}^\times$ so is $-i$, thus the quotient $\Z_{2p+1}^{\times}/\langle 1,-1\rangle$ is well defined.
$g$ is precisely this quotient map.
\footnote{There is not an element of $\mathbb G_{2p+1}^\times$ for all $\pm i$. 
To see this consider $\mathbb Z_9^{\times}$, which has order six rather than eight, since three has no multiplicative inverse in $\mathbb Z_9$.}
$\mathbb G_{2p+1}^\times$ acts on $\phi_i$ in the obvious way and since $g$ is a group homomorphism, this preserves the fusion rules.
There are no automorphisms on pointed objects or between objects of different Frobenius-perron dimension, and so the automorphism group of $K_0(\mathcal C)$ must be the direct product.
This gives a separate proof from that in \cite{MR1887583}.
\end{proof}

\subsection{F-Matrices}\label{ss:FS}

Our solutions to \eq{F1}-\eq{F3} are indexed by pairs $(r,\kappa)$ where $r$ is an odd integer between  $1$ and $2p+1$, such that $gcd(r,2p+1)=1$ and $\kappa=\pm 1$. 

\subsubsection{Notation}
Fix $q=\mathbf e^{\frac{\pi \imath}{2p+1}}$.
To write the general $F$-symbols, we first introduce the following matrices.
\begin{align*}
A(s_1,s_2,s_3,s_4) &= \frac{1}{\sqrt{2}} \begin{pmatrix} s_1 & s_2 \\ s_3 & s_4 \end{pmatrix} &
B &= \begin{pmatrix} 0 & 1 \\ 1 & 0 \end{pmatrix} &
C &= \frac{1}{2} \begin{pmatrix} 1 & -1 & \sqrt{2} \\ -1 & 1 & \sqrt{2} \\ \sqrt{2} & \sqrt{2} & 0 \end{pmatrix} \\
\end{align*}
\begin{align}
D(r;t;s_1,s_2,s_3,s_4) &= \begin{pmatrix}
s_1 Re(q^{r t}) & s_2 Im(q^{r t}) \\
s_3 Im(q^{r t}) & s_4 Re(q^{r t})
\end{pmatrix} \label{eq:dmat}\\
E(r;t;s_1,s_2,s_3,s_4)&= \begin{pmatrix}
s_1 Im(q^{r t}) & s_2 Re(q^{r t}) \\
s_3 Re(q^{r t}) & s_4 Im(q^{r t})
\end{pmatrix}\label{eq:emat}
\end{align}
where the $s_i$ take values $\pm 1$ and satisfy $s_1 s_2 s_3 s_4 = -1$ such that the associated $F$-matrices are orthogonal.

In addition, we define the matrices $G(r,\kappa)$, $H(r,\kappa)$ and $H'(r,\kappa)$ from the following function:
\begin{equation}\label{eq:jfun}
J(i,j;r,\kappa) = \frac{2^{\zeta(i,j)}\kappa}{\sqrt{2p+1}} \left(q^{r}\right)^{ij}
\end{equation}
where $\zeta\left(i,j\right)=\frac{2-\delta_{i0}-\delta_{j0}} 2$.

$G(r,\kappa)$ is a $p\times p$ matrix with entries
\begin{equation}\label{eq:G}
G(r,\kappa)_{i,j} = (-1)^{(i-1)(j-1)} Im\left(J(i,j;r,\kappa)\right)
\end{equation}
whose indices run over $\{1,\ldots, p\}$.
$H(r,\kappa)$ and $H'(r,\kappa)$ are $(p+1)\times(p+1)$ matrices with entries
\begin{align}
H(r,\kappa)_{i,j} &= (-1)^{ij} Re\left(J(i,j;r,\kappa)\right)\label{eq:H}\\
H'(r,\kappa)_{i,j} &= (-1)^{\delta_{i0}+\delta_{j0}+1}H(r,\kappa)_{i,j}\nonumber
\end{align}
whose indices run over $\{0,\ldots, p\}$.
We note that the matrices $G(r)$ and $H^{(')}(r)$ are orthogonal provided that $r$ is an odd integer relatively prime to $2p+1$.

\subsubsection{Arithmetic Data}\label{ss:AD}
Before we give the specific $6J$ symbols, we make a few general remarks. 
First, the order of the entries in the $F$-matrices respects the order we specified above: $(\mathbf 1,\epsilon,\phi_i,\psi_\pm)$, where $i = 1,\ldots,p$.
Second, we will actually not give the values $F_{abc}^{d;ef}$ individually, but instead give the $F$-matrices $F_{abc}^d$.

In the basis we use to give the $F$-matrices, we have the following property
\begin{equation}
\label{eq:rotate}
F_{bcd}^{a} = \bigl( F_{abc}^{d}\bigr)^T \ .
\end{equation}
In addition, it is implicitly assumed that the label $d$ of $F_{abc}^{d}$ is in the tensor decomposition of $a \otimes b \otimes c$.
This allows us to only specify a reduced set of $F$-matrices, while the others can be deduced by using this relation. 
We note that often, but not not always, the $F$-matrices are symmetric. 
In addition, all the $F$-symbols are real in our basis.

We first observe that from \cite{MR3163515} that for fixed $p$ the fusion rules for $\{1,\epsilon, \psi_i\}$ give the tensor structure for $Rep(D_{2p+1})$.
$K_0(\mathcal C)$ is then a $\Z_2$-extension of $K_0(Rep(D_{2p+1}))$.
We will proceed by building subcategories Grothendieck equivalent to $\Z_2$ and $Rep(D_{2p+1})$ before finally specifying $F$-matrices which correspond to the $\sqrt{2p+1}$ objects.

First, if one or more of the labels $a,b,c,d$ equals $\id$, $F_{abc}^{d}=(1)$.
Next we have $\{\id, \epsilon\}$ equivalent to $\Z_2$ and the action of $\epsilon$ on the objects $\phi_i$ is determined by
\begin{align}
F_{\epsilon\epsilon\epsilon}^{\epsilon} &= \left(1\right) &
F_{\epsilon\phi_i\epsilon}^{\phi_i} &= \left(1\right) &
F_{\epsilon\epsilon\phi_i}^{\phi_i} &= \left(-1\right) \label{eq:eel}&
\end{align}
\begin{align}
F_{\epsilon\phi_i\phi_j}^{\phi_k} &= (-1^{j \bmod 2}) && (j\leq i) \label{eq:l1}\\
F_{\epsilon\phi_i\phi_j}^{\phi_k} &= (-1^{j \bmod 2}) && (j > i \wedge k = g(i+j)) \label{eq:l2}\\
F_{\epsilon\phi_i\phi_j}^{\phi_k} &= (-1^{(j-1) \bmod 2}) && (j > i \wedge k = g(i-j))\label{eq:l3}
\end{align}
The rest of the $F$-matrices for the category $Rep(D_{2p+1})$ are given by: 
\begin{align}
F_{\phi_i\phi_i\phi_i}^{\phi_i} &= C \\
F_{\phi_i\phi_j\phi_i}^{\phi_j} &= B && (j \neq i) \\
F_{\phi_i\phi_i\phi_j}^{\phi_j} &= A(1,1,-1^{(j-i+1) \bmod 2},-1^{(j-i)\bmod 2}) && (j\neq i)\\
F_{\phi_i\phi_j\phi_k}^{\phi_l} &= (1) && \text{(O.W.)}\label{eq:ow}
\end{align}

\begin{proposition}
There are no other solutions to the pentagon equations for $Rep(D_{2p+1})$ which extend to solutions for $\sost{2p+1}$.
\end{proposition}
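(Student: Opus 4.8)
The plan is to classify, up to gauge equivalence \eqref{eq:fgauge} and the $Aut(N)$-action \eqref{eq:autaction}, every solution of the pentagon equations \eqref{eq:F3} for the fusion subring carried by $L' = \{\mathbf 1,\epsilon,\phi_1,\dots,\phi_p\}$, and then to single out which of the resulting finitely many classes (finiteness being Ocneanu rigidity) is the restriction of a solution on all of $L$. First I would gauge-fix: set every $F$-matrix carrying a $\mathbf 1$ label equal to $(1)$, and then spend the residual gauge freedom on the $\epsilon$-sector. The subring $\{\mathbf 1,\epsilon\}$ is $K_0(\mathrm{Vec}_{\mathbb Z_2})$, so the single scalar $F_{\epsilon\epsilon\epsilon}^{\epsilon} = \omega$ with $\omega\in\{+1,-1\}$ is a complete gauge invariant of that sector, representing the two classes of $H^3(\mathbb Z_2,\C^\times)$. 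The whole problem thus splits into the cases $\omega=+1$ and $\omega=-1$.

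With $\omega$ fixed, I would solve the $\phi$-sector in stages. The pentagons having one or two external labels equal to $\epsilon$ and the rest among the $\phi_i$ are one-dimensional and pin the scalars $F_{\epsilon\phi_i\epsilon}^{\phi_i}$, $F_{\epsilon\epsilon\phi_i}^{\phi_i}$, $F_{\epsilon\phi_i\phi_j}^{\phi_k}$ to signs, in a pattern fixed by $\omega$ together with the remaining gauge; for $\omega=+1$ this is the pattern \eqref{eq:eel}--\eqref{eq:l3}. The genuinely higher-dimensional matrices are $F_{\phi_i\phi_j\phi_i}^{\phi_j}$ and $F_{\phi_i\phi_i\phi_j}^{\phi_j}$ ($2\times 2$, from $\phi_i\otimes\phi_j\cong\phi_{g(i-j)}\oplus\phi_{g(i+j)}$ for $i\ne j$) and $F_{\phi_i\phi_i\phi_i}^{\phi_i}$ ($3\times 3$, from $\phi_i\otimes\phi_i\cong\mathbf 1\oplus\epsilon\oplus\phi_{g(2i)}$). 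Here I would chain the deductions: (i) pentagons of external shape $(\phi_i,\phi_j,\phi_i,\ast)$ force $F_{\phi_i\phi_j\phi_i}^{\phi_j}$ to be monomial, hence $B$ up to gauge; (ii) pentagons coupling $F_{\phi_i\phi_i\phi_j}^{\phi_j}$ to the already-fixed $\epsilon$-data leave only the sign freedom recorded in $A(s_1,s_2,s_3,s_4)$, with $s_1s_2s_3s_4=-1$ forced by orthogonality; (iii) after substituting (i)--(ii), the pentagons with all external labels in $\{\phi_i,\phi_{g(2i)},\phi_{g(3i)}\}$ determine $F_{\phi_i\phi_i\phi_i}^{\phi_i}$, up to residual gauge, to be $C$. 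Thus for $\omega=+1$ one recovers exactly the displayed data and nothing else.

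It then remains to show $\omega=-1$ cannot be extended to $L$, and this is the step I expect to be the real obstacle. The mechanism is that $\psi_+\otimes\psi_-\cong\epsilon\oplus\bigoplus_j\phi_j$ forces $\epsilon$ and the $\phi_j$ into a single tensor product, so a pentagon with external labels drawn from $\{\epsilon,\psi_+,\psi_-,\phi_j\}$ can be traversed through the $\epsilon$ channel, where it sees $\omega$, and through the $\phi$ channels, where it sees the already-normalized entries of $A,B,C$, and consistency of the two traversals forces a relation of the form $\omega = (\text{product of entries of the normalized }\phi\text{-sector matrices})$, whose right-hand side evaluates to $+1$. The work is to select one quadruple for which this computation is clean and to carry it out, after first using the pentagons $(\epsilon,\epsilon,\epsilon,\psi_+)$ and $(\epsilon,\psi_+,\psi_-,\epsilon)$ to normalize the one-dimensional $F_{\epsilon\epsilon\psi_\pm}^{\psi_\pm}$; alternatively, one can read off the same obstruction from graded-extension theory, since the $\mathbb Z_2$-grading of $L$ by $\psi$-parity realizes every $\sost{2p+1}$ fusion category as a faithful graded extension of its degree-zero part, and the extension obstruction constrains the cohomology class of that part, which after the above normalization is measured by $\omega$. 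Granting this, the displayed $F$-matrices are, up to gauge and permutation, the unique pentagon solution for $Rep(D_{2p+1})$ admitting an $\sost{2p+1}$ completion, which is the assertion.
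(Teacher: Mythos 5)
Your overall strategy---first classify the categorifications of the $Rep(D_{2p+1})$ subring, then decide which of them extend---matches the paper's in outline, but the execution has a gap at precisely the load-bearing step. The paper does no pentagon computation here at all: it invokes Naidu--Rowell to conclude that every fusion category with this Grothendieck ring is group-theoretical, hence governed by a class $\omega\in H^3(D_{2p+1},\C^\times)$ together with Ostrik's module-category data, and then observes that the $\Z_2$-grading of $\sost{2p+1}$ forces $\{\psi_+,\psi_-\}$ to be a rank-two indecomposable module category over the trivial component, which exists only for trivial $\omega$. The moduli being excluded therefore live in a cyclic group of odd order attached to $D_{2p+1}$, and the inequivalent pentagon solutions they produce all agree on the pointed subcategory: they differ in the $\phi$-sector (odd-order roots of unity entering $F_{\phi_i\phi_i\phi_i}^{\phi_i}$ and $F_{\phi_i\phi_j\phi_k}^{\phi_l}$), not in the sign $F_{\epsilon\epsilon\epsilon}^{\epsilon}$.

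This is why your case split on $\omega=F_{\epsilon\epsilon\epsilon}^{\epsilon}=\pm1$ misses the content. First, the case $\omega=-1$ is empty for a reason internal to the subring and independent of the $\psi_\pm$: since $\epsilon\otimes\phi_i\cong\phi_i$, the single object $\phi_i$ carries the structure of a rank-one module category over $\langle\mathbf 1,\epsilon\rangle$, and a rank-one module category over $Vec_{\Z_2}^{\omega}$ exists only when $\omega$ is trivial in $H^3(\Z_2,\C^\times)$; equivalently, the pentagon with external labels $(\epsilon,\epsilon,\epsilon,\phi_i)$ already forces $F_{\epsilon\epsilon\epsilon}^{\epsilon}=1$. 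So the obstruction argument in your final paragraph, which is where you place the ``real obstacle,'' excludes an empty case. Second, and more seriously, the entire substance of the proposition sits inside your stage (iii), where you assert without argument that the pentagons force $F_{\phi_i\phi_i\phi_i}^{\phi_i}$ to be $C$ up to gauge. They do not: the other categorifications that the proposition exists to rule out all have trivial associator on the pointed part, survive your stages (i)--(ii), and are separated from the displayed solution only at this step. Ruling them out requires either the cohomological input the paper uses (only the untwisted class admits the rank-two indecomposable module category needed for the $\Z_2$-extension) or a genuinely nontrivial pentagon analysis that you have not supplied; indeed, as written your argument would show that the $Rep(D_{2p+1})$ ring admits at most two pentagon solutions up to equivalence, which contradicts the premise of the proposition that there are others which merely fail to extend.
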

\begin{proof}
By \cite{MR2832261} all fusion categories $\mathcal C$ for $Rep(D_{2p+1})$ are group theoretical and thus Morita equivalent to a pointed fusion category $\mathcal D$ of the form $(D_{2p+1},\omega)$ where $\omega \in H^3(D_{2p+1},\C^\times)$, the group of $2p+1$ roots of unity.
By \cite{MR1976459} we know that (left) module categories over $D_{2p+1}$ are parameterized by $(H,\zeta)$ where $H\leq G$ such that $\omega\vert_H=1$ and $\zeta\in H^2(H,\C^\times)$.

In our case we have that since the universal grading of $\sost{2p+1}$ is $\Z_2$, we have that $\{\psi_+,\psi_-\}$ are indecomposable $\Z_2$ module categories over $Rep(D_{2p+1})$.
We can count which $Rep(D_{2p+1})$ categories have $\Z_2$ module categories by looking at which $D_{2p+1}$ categories have $\Z_2$ module categories.
Since $H^3(D_{2p+1},\C^\times)\cong \Z_{2p+1}$ the only $\omega$ which fixes the $\Z_2$ subgroup of $D_{2p+1}$ is $\omega=1$ and its cohomology is $\Z_2$.
Thus only $Rep(D_{2p+1})$ has indecomposable $\Z_2$ module categories and there are two of them.
\end{proof}

We now specify the $F$-matrices that are labeled by $\psi_\pm$ and start with the $F$-matrices whose labels consist of only $\psi_\pm$.
\begin{align}
F_{\psi_\pm\psi_\pm\psi_\pm}^{\psi_\pm} &= H(r,\kappa) \label{eq:four}\\ 
F_{\psi_\pm,\psi_\mp,\psi_\pm}^{\psi_\mp} &= -H(r,\kappa) \label{eq:twomixed}\\
F_{\psi_\pm\psi_\pm\psi_\mp}^{\psi_\mp} &= H'(r,\kappa) \label{eq:twonext}\\   
F_{\psi_\pm\psi_\pm\psi_\pm}^{\psi_\mp}   &= \pm G(r,\kappa)\label{eq:three}
\end{align}
There are three classes of $F$-matrices involving two labels that are $\psi_\pm$.
The first class of this type is
\begin{align}
F_{\phi_i\psi_\pm\phi_j}^{\psi_\pm} &= \mp(-1)^{ij} D(r;ij;-1,-1^{(i+j)},-1^{(i+j)},1) \label{eq:twotwonext}\\
F_{\phi_i\psi_\pm\phi_j}^{\psi_\mp} &= -(-1)^{ij} E(r;ij;-1^{(i + j)},1,1,-1^{(i+j+1)}) \label{eq:twotwomixed}
\end{align}
where $i,j$ may or may not be equal.
The second class of $F$-matrices with two labels equal to $\psi_\pm$ and $i \neq j$ is
\begin{align*}
F_{\phi_i\phi_i\psi_\pm}^{\psi_\pm} &= A(1,1,1,-1) \\
F_{\phi_i\phi_j\psi_\pm}^{\psi_\pm} &= A(\pm 1,\pm 1,1,-1) && (i-j) \bmod 2 = 0 \\
F_{\phi_i\phi_j\psi_\pm}^{\psi_\pm} &= A(1,-1,\pm 1,\pm 1) && (i-j) \bmod 2 = 1 \\
F_{\phi_i\phi_i\psi_\pm}^{\psi_\mp} &= A(-1,-1,(-1)^{i},(-1)^{(i+1)}) \\
F_{\phi_i\phi_j\psi_+}^{\psi_-} &= A((-1)^{(j+1)},(-1)^{(i+1)},(-1)^{j},(-1)^{(i+1)}) && i < j \\
F_{\phi_i\phi_j\psi_-}^{\psi_+} &= A((-1)^{i},(-1)^{j},(-1)^{i},(-1)^{(j+1)}) && i < j \\
F_{\phi_i\phi_j\psi_+}^{\psi_-} &= A((-1)^{j},(-1)^{i},(-1)^{j},-1^{(i+1)}) && i > j \\
F_{\phi_i\phi_j\psi_-}^{\psi_+} &= A((-1)^{(i+1)},(-1)^{(j+1)},(-1)^{i},-1^{(j+1)}) && i > j \\
\end{align*}
Finally, the third class of $F$-matrices with two labels equal to $\psi_+$ or $\psi_-$ is
\begin{align*}
F_{\epsilon\phi_i\psi_\pm}^{\psi_\pm} &= F_{\epsilon\phi_i\psi_\pm}^{\psi_\mp} = F_{\epsilon\psi_\pm \phi_i}^{\psi_\pm} = F_{\epsilon\psi_\pm \psi_\pm}^{\phi_i} = F_{\epsilon\psi_\pm \psi_\mp}^{\phi_i} =  \left( 1\right) \\ \\
F_{\epsilon\psi_\pm\phi_i}^{\psi_\mp} &= F_{\epsilon\epsilon\psi_\pm}^{\psi_\pm} = F_{\epsilon \psi_\pm \epsilon}^{\psi_\pm} = \left(-1\right) \\
\end{align*}
With these symbols, and the general rules described above, we have exhausted all
the $F$-matrices.

\subsection{$R$-Matrices}
Given our solutions to the pentagon equations, one would anticipate it is not too complicated to construct solutions to the hexagon equations as well.

Given a solution to the hexagon equations one always has a second solution given by inverses.
This simply corresponds to a choice of $R$ and $R^{-1}$ in \eq{R1} and \eq{R2}.
We may also obtain another solution by replacing all $R$-symbols $R_{ab}^{c}$, such that $a$ and $b$ are both one of $\psi_\pm$ by $-R_{ab}^{c}$.
However, this is monoidally equivalent to our original solution via the automorphism $\psi_\pm\rightarrow \psi_\mp$.
This gives us that for any $p$, solutions to the pentagon and hexagon equations can be uniquely identified by the tuple $(p,r,\kappa,\lambda)$, where $\lambda=\pm 1$ indicates whether or not one is referring to the R-symbols given below or their inverses.
We then provide only one such solution.

We specify the $R$-symbols using a form inspired by conformal field theory.
Namely, for each simple object type, we introduce the scaling dimensions $h_a$ which determine the $R$-symbols up to a sign 
\begin{displaymath}
R_{ab}^{c} = (\sigma_{1})_{ab}^{c} (\sigma_{2})_{ab}^{c}(-1)^{h_{a} + h_{b} - h_{c}},
\end{displaymath}
where $(\sigma_{1})_{ab}^{c} = \pm 1$ and $(\sigma_{2})_{ab}^{c} = \pm 1$.
Below we specify the scaling dimensions $h_{a}$ for all the simple objects, as well as the signs $(\sigma_{1})_{ab}^{c}$ and $(\sigma_{2})_{ab}^{c}$.

The scaling dimensions are given by
\begin{align*}
h_\id &= 0 \\
h_\epsilon &= 1 \\
h_{\phi_{i}} &= \frac{ri(2p+1-i)}{2(2p+1)} \\
h_{\psi_+} &= \frac{r(p + \kappa \, s_p - (2p + 1 | r) + 2)}{8} \\
h_{\psi_-} &= \frac{r(p + \kappa \, s_p - (2p + 1 | r) + 6)}{8} \\
\intertext{where }
s_p &= -(-1)^{\frac{p(p+1)}{2}} = -(2|2p+1) \ .
\end{align*}
The notation $(j | n)$ denotes the Jacobi symbol, which is defined for $j$ an integer, and $n$ a positive, odd integer.

To completely specify the $R$-symbols we still need to specify the signs $(\sigma_{1})_{ab}^{c}$ and $(\sigma_{2})_{ab}^{c}$. 

We start with $(\sigma_{1})_{ab}^{c}$, and note that for the $R$-symbols presented here we have the property $(\sigma_{1})_{bc}^{a} = (\sigma_{1})_{ab}^{c}$, which implies that we only need to specify a limited set of these symbols. 
The non-trivial symbols come in two classes.
The first class is
\begin{equation*}
(\sigma_{1})_{\phi_i\psi_\pm}^{\psi_\pm} =
(\sigma_{1})_{\phi_i\psi_\pm}^{\psi_\mp} =
\begin{cases}
-1 & ((i \bmod 4) = (1 \vee 2)) \wedge (p \bmod 2)=1 \\
-1 & ((i \bmod 4) = (2 \vee 3)) \wedge (p \bmod 2)=0 \\
+1 & \text{ otherwise.}
\end{cases}
\end{equation*}
The second class of symbols is
\begin{equation}
(\sigma_{1})_{\phi_i\phi_j}^{\phi_{g(i + j)}} = (-1)^{(i j)} \ .
\end{equation}
The symbols $(\sigma_{1})_{ab}^{c}$ that are not specified by the rules above, are all
equal to $+1$.

Finally, the signs 
$(\sigma_{2})_{ab}^{c}$
also satisfy the property
$(\sigma_{2})_{bc}^{a} = (\sigma_{2})_{ab}^{c}$ and
there is only one class of non-trivial signs of type, namely
\begin{equation*}
(\sigma_{2})_{\epsilon\psi_\pm}^{\psi_\mp} = (\sigma_{2})_{\phi_i\psi_\pm}^{\psi_\mp} = (-1)^{\frac{r-1}{2}} \ .
\end{equation*}
The symbols $(\sigma_{2})_{ab}^{c}$ that are not specified by the rules above, are all equal to $+1$.

With this, we have completely specified one of the solutions of the hexagon equations, from which the other one follows by
taking the inverse.

\begin{remark}
We close by commenting briefly on the appearance of the Jacobi symbols in the expression for the $R$-symbols. 
One can think of the list of hexagon equations as labeled by the different $F$-symbols, namely the one appearing on the left hand side of the symbolic form $R F R = \sum F R F$. 
To derive the form of $h_{\psi_\pm}$ above, consider the hexagon equation associated to $F_{\psi_+\psi_+\psi_+}^{\psi_+;\id\id}$.
The sum on the right hand side gives rise to a quadratic Gauss sum (because one needs to sum over all $\phi_i$ (as well as the identity)), which are closely related to the Jacobi symbols. 
This leads to their presence in the scaling dimensions $h_{\psi_\pm}$. See Appendix~\ref{a:proof} for more details.

As stated above, the Jacobi symbols $( j | n )$ are defined for positive odd integers $n$ and arbitrary integers $j$. 
Interestingly, we observed that the these Jacobi symbols can be written in terms of the matrices $H(r,\kappa)$ and $G(r,\kappa)$ (whose sizes depend on $p$).
In particular, one can show $$( r | 2p +1 ) = \det (H( 2r + 2p+1 )) \det (G(2r+2p+1)).$$
This gives an analytic function of $r$, that goes through all the
Jacobi symbols, defined for $r$ integer.
We note that Eisenstein also constructed such a function,
$(q | p) = \prod_{n=1}^{(p-1)/2} \frac{\sin(2\pi qn/p)}{\sin(2\pi n/p)}$,
see for instance \cite{MR1761696}.
The difference with the function that we found, is that the values $\pm 1$ are the extrema of the function while this is certainly not the case for Eisenstein's function.
\end{remark}

\subsubsection{Modular data for $\sost{2p+1}$ Modular Systems}
All of our F-Matrices are manifestly unitary, thus all of our categories admit at least one spherical structure.
Since they are also braided, they must be ribbon and so we can use the formula \eq{twist}.
Our formula \eq{smat} is defined for any braided category, however one can obtain the more common form by taking $D S D$.
We present the latter to more easily demonstrate that we obtain the same modular result as \cite{MR2587410}.
However, we also provide an explicit classification at the fusion level.

Pivotal structures on modular categories are in bijective correspondence with the group of invertible objects and spherical structures with the maximal Abelian 2-subgroup\cite[Lemma 2.4]{2013arXiv1310.7050B}.
Since the group of invertible objects is $\Z_2$, there are exactly two pivotal structures for each category, both of which are spherical.
It is straightforward to compute that for $\mathbf 1$, $\epsilon$, and $\phi_i$, all pivotal coefficients must be $1$. 
We also have $\epsilon_{\psi_+}=\epsilon_{\psi_-}=\pm 1$, the choice of which switches the sign of $q_{\psi_\pm}$. 

Below, we present the modular data for solutions with positive quantum dimensions.
It's worth noting that for all $p$ and $r$ the pivotal coefficients which yield positive quantum dimensions for $\kappa=1$ are not those that yield positive quantum dimensions for $\kappa=-1$.
This can be seen from noting the effect of $\kappa$ on \eq{qdims}. In particular, $q_{\psi_\pm} > 0$ if we take $\epsilon_{\psi_\pm} = \kappa$.

We then have that, for modular categories corresponding to the solution indexed by $(p,r,\kappa,\lambda)$,
\begin{align}
S_{\mathbf 1 \mathbf 1} 	&= 1 					&S_{\epsilon \epsilon} 		&= 1 								&S_{\phi_i \phi_j} 		&=  4\cos\left(\frac{2 \pi i j r}{2p+1}\right) \label{eq:sm1}\\
S_{\mathbf 1 \epsilon}     	&= 1 					&S_{\epsilon \phi_i}		&= 2 								&S_{\phi_i \psi_\pm}	&= 0 \label{eq:sm2}\\
S_{\mathbf 1 \phi_i} 		&= 2					&S_{\epsilon \psi_\pm}		&= -q_{\psi_\pm}	&S_{\psi_\pm \psi_\pm}	&= -\kappa (2 | 2p+1)q_{\psi_\pm}\label{eq:sm3}\\
S_{\mathbf 1 \psi_{\pm}}	&=q_{\psi_\pm}			&							&									&S_{\psi_\pm \psi_\mp} 	&= \kappa (2 | 2p+1)q_{\psi_\mp} \label{eq:sm4}
\end{align}
For the twists we have
\begin{align}
heta_\epsilon 			&= 1 					&\theta_{\phi_i} 			&= (-1)^i \mathbf{e}^{\imath \pi \frac{\lambda r i^2}{2p+1}} \\
\theta_{\psi_\pm}&= \mp (-1)^{\frac{\kappa \lambda r}{2}} \mathbf{e}^{\frac{\imath \pi \lambda r}{4} ((2p+1|r)+\kappa (2|2p+1)-p)} \ .
\label{eq:tw1} 
\end{align}

For completeness, we also calculate the topological central charge from eqs. \eq{pplusminus},\eq{centralcharge}. We first note that the contributions from $\psi_\pm$ in the sum to obtain $p_+$ cancel. Using similar arguments as given in Appendix~\ref{a:proof}, one finds that the remaining terms constitute a quadratic Gauss sum, resulting in
$p_+/\sqrt{\sum_a q_a^2} = \epsilon_{2p+1} (2\lambda r | 2p+1)$, where $\epsilon_{2p+1} = 1$ for $p$ even, and $\epsilon_{2p+1}=\imath$ for $p$ odd.
Using standard manipulations of the Jacobi symbol, one can finally write
\begin{equation}
c_{\rm top} = 2 p (\lambda + 2 p) -2 + 2 (r | 2p+1) \bmod 8 \ .
\end{equation}

\section{Monoidal Equivalence of $\sost{2p+1}$ Fusion Systems}\label{s:equivs}
In this section we introduce those gauge invariants which we will use to classify monoidally inequivalent solutions for a given $p$ using the method of Section \ref{s:invs}.
At the level of gauge equivalence it is easy to see that two solutions $(r,\kappa)$ and $(r',\kappa')$ are gauge equivalent if and only if $r= r'$ and $\kappa=\kappa'$.
Our goal then is to find a set $\{l_1,\ldots, l_n\}$ of $Aut(N)$-invariant linear combination of $G$-invariant monomials as defined in Theorem \ref{thm:invs} which determine the monoidal classes.
We then determine the number of monoidal equivalence classes for a given $p$.
For all pairs $(r,\kappa)$ as defined in Section \ref{ss:FS} let $F_{(r,k)}$ be the evaluation map as defined for Theorem \ref{thm:invs}.

\subsection{Determining Equivalence}

We first note that the adjoint category is basically useless to us.
\begin{proposition}\label{prop:adjsub}
The monoidal classes of solutions specified in \ref{ss:AD} cannot be distinguished by $Aut(N)\ltimes G$-invariants coming from $Rep(D_{2p+1})$ subcategories.
\end{proposition}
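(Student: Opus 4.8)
The plan is to show that all the $F$-matrices coming from the $Rep(D_{2p+1})$ subcategory — namely those in equations \eq{eel}--\eq{ow}, i.e. the ones whose labels lie entirely in $\{\mathbf 1,\epsilon,\phi_i\}$ — do not depend on the parameters $(r,\kappa)$ at all. Indeed, inspecting the arithmetic data in Section \ref{ss:AD}, one sees that $F_{\phi_i\phi_i\phi_i}^{\phi_i}=C$, $F_{\phi_i\phi_j\phi_i}^{\phi_j}=B$, the $A$-type matrices $F_{\phi_i\phi_i\phi_j}^{\phi_j}$, and the sign matrices \eq{eel}--\eq{l3} are all prescribed by fixed numbers independent of $(r,\kappa)$; the parameters $r$ and $\kappa$ only enter through the functions $D,E,G,H,H'$ which appear exclusively in $F$-matrices carrying at least one $\psi_\pm$ label. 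Consequently, for any two of our solutions $F_{(r,\kappa)}$ and $F_{(r',\kappa')}$, the restrictions of the evaluation maps to the subring $\C[\Phi(L_{D},N_D)]$ generated by the $Rep(D_{2p+1})$ $6J$-symbols agree: $F_{(r,\kappa)}(\phi)=F_{(r',\kappa')}(\phi)$ for every $\phi\in\Phi(L_D,N_D)$.

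First I would make precise the notion of ``invariant coming from $Rep(D_{2p+1})$ subcategories.'' Using Theorem \ref{thm:invs} and the setup of Section \ref{s:invs}, such invariants are $Aut(N)\ltimes G$-invariant rational monomials (and $Aut(N)$-linear combinations thereof) in the variables $\Phi(L_D,N_D)\subset \Phi(L,N)$ — the $6J$-symbols indexed only by labels in $\{\mathbf 1,\epsilon,\phi_i\}$. Since these are in particular rational functions of the $\phi\in\Phi(L_D,N_D)$, defined on a suitable open set, and since $F_{(r,\kappa)}$ and $F_{(r',\kappa')}$ agree on all of $\Phi(L_D,N_D)$ by the previous paragraph, they agree on any rational function of these variables wherever it is defined — in particular on any invariant $l$ built solely from $\Phi(L_D,N_D)$. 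Hence $F_{(r,\kappa)}(l)=F_{(r',\kappa')}(l)$ for all $(r,\kappa),(r',\kappa')$, so no such $l$ can separate two distinct monoidal classes.

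A subtlety I would address is that the $Aut(N)$-averaging in Theorem \ref{thm:invs} runs over the full automorphism group $\mathbb G_{2p+1}^\times\times\Z_2$ of $K_0(\mathcal C)$, not merely over $Aut(N_D)$; but $Aut(N)$ acts on the $\Phi(L_D,N_D)$ variables by permutation via \eq{autaction}, and an $Aut(N)$-orbit sum of a $G$-invariant monomial in the $Rep(D_{2p+1})$ variables is still a fixed rational expression in those same variables, so the argument of the previous paragraph still applies verbatim. I would also remark that this is consistent with — and can be read off from — the earlier Proposition on the $Rep(D_{2p+1})$ subcategory: by \cite{MR2832261} there is essentially a unique (up to the relevant equivalence) categorification of $Rep(D_{2p+1})$ extending to $\sost{2p+1}$, so all our solutions restrict to gauge-equivalent fusion systems on the adjoint subcategory, and gauge-equivalent systems are by definition indistinguishable by $G$-invariants.

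The main obstacle is purely bookkeeping rather than conceptual: one must verify carefully that every $6J$-symbol indexed solely by $\{\mathbf 1,\epsilon,\phi_i\}$ is indeed $(r,\kappa)$-independent — i.e. that none of the $Rep(D_{2p+1})$ $F$-matrices secretly involves $q=\mathbf e^{\pi\imath/(2p+1)}$, $r$, or $\kappa$ — and to confirm that the variables $\Phi(L,N)$ split cleanly so that ``an invariant coming from the subcategory'' genuinely means ``a function of $\Phi(L_D,N_D)$ only.'' Once that separation of variables is in hand, the proposition follows immediately from the observation that the two evaluation maps coincide on $\Phi(L_D,N_D)$.
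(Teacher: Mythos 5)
Your proof is correct and follows essentially the same route as the paper's: both rest on the observation that the parameters $(r,\kappa)$ enter the $6J$-symbols only through the matrices $D$, $E$, $G$, $H^{(')}$, which all require at least one $\psi_\pm$ label, so the evaluation maps $F_{(r,\kappa)}$ and $F_{(r',\kappa')}$ coincide on every variable indexed solely by $\{\mathbf 1,\epsilon,\phi_i\}$ and hence on any invariant built from them. Your extra care about the $Aut(N)$-averaging and the separation of variables is a welcome tightening of what the paper leaves implicit, but it is not a different argument.
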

\begin{proof}
For fixed $p$, the only $F$-matrix entries which differ are those in which $r$ arises, i.e. those specified by the $D$, $E$, $H^{(')}$, and $G$ matrices.
These come from $F$-matrices involving at least one $\psi_\pm$ and so do not come from the $Rep(D_{2p+1})$ subcategory.
So if $l$ is an $Aut(N)\ltimes G$-invariant which comes from $Rep(D_{2p+1})$, i.e. only involves the objects $1$, $\epsilon$, and $\phi_i, i=1,\ldots p$ then
$$
F_{(r,k)}(l)=F_{(r',k')}(l).
$$
\end{proof}

From here, we can narrow our search even further.
From \eq{four}-\eq{twotwomixed}, the entries of $$F_{\phi_i\psi_\pm\phi_j}^{\psi_\pm}, F_{\phi_i\psi_\pm\phi_j}^{\psi_\mp},F_{\psi_\pm\psi_\mp\psi_\mp}^{\psi_\mp}, \text{ and }F_{\psi_\pm\psi_\mp\psi_\pm}^{\psi_\mp}$$ can all be written in some fixed way as scalar multiples of entries in $F_{\psi_\pm\psi_\pm\psi_\pm}^{\psi_\pm}$ or $F_{\psi_\pm\psi_\pm\psi_\pm}^{\psi_\mp}$.
These relationships are preserved under permutation.
The entries of $F_{\psi_\pm\psi_\pm\psi_\pm}^{\psi_\pm}$ and $F_{\psi_\pm\psi_\pm\psi_\pm}^{\psi_\mp}$ are given by the matrices $H(r,\kappa)$ and $\pm G(r,\kappa)$ respectively.

\begin{proposition}
Let $F_{(r,\kappa)},F'_{(r',\kappa')}$ be solutions to \eq{F1}-\eq{F3} parameterized by $(r,\kappa)$ and $(r',\kappa')$.
Then $(r,\kappa)=(r',\kappa')$ if and only if $H(r,\kappa)=H(r',\kappa')$.
\end{proposition}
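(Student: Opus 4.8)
The forward implication is immediate from the definitions, so the work is entirely in the converse, and the plan is to recover $\kappa$ and then $r$ from two well-chosen entries of the matrix $H(r,\kappa)$.

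First I would write the entries out explicitly. Combining the definition of $J$ in \eq{jfun}, the definition \eq{H} of $H$, and $q=\mathbf{e}^{\pi\imath/(2p+1)}$, one obtains
\[
H(r,\kappa)_{i,j}=(-1)^{ij}\,\frac{2^{\zeta(i,j)}\,\kappa}{\sqrt{2p+1}}\,\cos\left(\frac{\pi r\,i\,j}{2p+1}\right),\qquad i,j\in\{0,\dots,p\}.
\]
Evaluating at $(i,j)=(0,0)$, where $\zeta(0,0)=0$ and $\cos 0 = 1$, gives $H(r,\kappa)_{0,0}=\kappa/\sqrt{2p+1}$. Since $p$ is fixed and $\sqrt{2p+1}>0$, this entry is nonzero with sign equal to $\kappa$, so the hypothesis $H(r,\kappa)=H(r',\kappa')$ already forces $\kappa=\kappa'$ by comparing $(0,0)$ entries.

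Next I would evaluate at $(i,j)=(1,1)$, where $\zeta(1,1)=1$ and $(-1)^{ij}=-1$, giving $H(r,\kappa)_{1,1}=-\frac{2\kappa}{\sqrt{2p+1}}\cos\left(\frac{\pi r}{2p+1}\right)$. With $\kappa=\kappa'$ in hand, equality of the $(1,1)$ entries collapses to $\cos(\pi r/(2p+1))=\cos(\pi r'/(2p+1))$. Recall that $r$ and $r'$ are odd integers with $1\le r,r'<2p+1$, so $\pi r/(2p+1)$ and $\pi r'/(2p+1)$ both lie in the open interval $(0,\pi)$, on which cosine is strictly decreasing and hence injective. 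Therefore $r=r'$, which completes the converse.

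I do not expect a genuine obstacle: the argument reduces to a two-entry computation. The only point requiring a moment's care is checking that the relevant cosine arguments really land in $(0,\pi)$, so that injectivity of $\cos$ can be invoked; this is exactly where the standing constraints on $r$ and $r'$ (odd and strictly less than $2p+1$) enter. One could equally use any entry $H(r,\kappa)_{0,j}$ with $j\ge 1$ in place of the $(0,0)$ entry to pin down $\kappa$, but the $(0,0)$ and $(1,1)$ entries together already suffice.
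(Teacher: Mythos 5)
Your proof is correct. The forward direction is trivial, and your converse correctly recovers $\kappa$ from the $(0,0)$ entry of $H$ and then $r$ from the $(1,1)$ entry, using that both $\pi r/(2p+1)$ and $\pi r'/(2p+1)$ lie in $(0,\pi)$, where cosine is injective. This is essentially the computation the paper carries out, but the paper places it in the corollary immediately following the proposition (that $Diag(H(r,\kappa))$ already determines $(r,\kappa)$): there $H(r,\kappa)_{0,0}$ fixes $\kappa$ and $H(r,\kappa)_{1,1}\propto Re(q^{r})$ narrows $r$ to the pair $\{r,-r\}$ modulo $2p+1$, with the ambiguity resolved by the requirement that $r$ be odd, rather than by your cleaner observation that the cosine arguments already lie in a domain of injectivity. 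The paper's stated proof of the proposition itself takes a slightly different route: it argues that $H(r,\kappa)$ determines $G(r,\kappa)$ up to a sign, hence determines $F_{\psi_\pm\psi_\pm\psi_\pm}^{\psi_\pm}$ exactly and $F_{\psi_\pm\psi_\pm\psi_\pm}^{\psi_\mp}$ up to the sign corresponding to relabeling $\psi_+\leftrightarrow\psi_-$, which is irrelevant to $(r,\kappa)$. That argument records the additional fact (used later) that the remaining $F$-data is controlled by $H$, but as written it still leans on the injectivity of the map $(r,\kappa)\mapsto H(r,\kappa)$, which is precisely what your explicit two-entry computation supplies; in that sense your version is the more self-contained of the two.
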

\begin{proof}
The direction $(r,\kappa)=(r',\kappa')\Rightarrow H(r,\kappa)=H(r',\kappa')$ is obvious, so assume $H(r,k)=H(r',k')$.
$H(r,\kappa)$ determines $G(r,\kappa)$ up to a sign and 
\begin{align*}
(F_{(r,k)})_{\psi_\pm\psi_\pm\psi_\pm}^{\psi_\pm} &= (F_{(r',k')})_{\psi_\pm\psi_\pm\psi_\pm}^{\psi_\pm} & &\text{ and }  \\
(F_{(r,k)})_{\psi_\pm\psi_\pm\psi_\pm}^{\psi_\mp}&=\pm (F_{(r',k')})_{\psi_\pm\psi_\pm\psi_\pm}^{\psi_\mp}.
\end{align*}
This corresponds to a choice $\psi_\pm$, which is irrelevant to $(r,\kappa)$, so this implies $(r,\kappa)=(r',\kappa')$.
\end{proof}
\begin{corollary}\label{cor:hdiag}
$(r,\kappa)=(r',\kappa')$ if and only if $$Diag(H(r,\kappa))=Diag(H(r',\kappa')).$$
\end{corollary}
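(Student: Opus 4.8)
The plan is to piggyback on the Proposition just above, which already establishes $(r,\kappa)=(r',\kappa')\iff H(r,\kappa)=H(r',\kappa')$. Consequently it suffices to prove that the vector $Diag(H(r,\kappa))$ alone determines the pair $(r,\kappa)$; the forward implication of the corollary (equal parameters force equal, hence diagonally equal, matrices) is immediate, so all the content is in the converse.

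First I would read off the two diagonal entries that are actually needed, directly from the definitions \eq{jfun} and \eq{H}. Since $i^{2}\equiv i\pmod 2$, the diagonal has the form $H(r,\kappa)_{i,i}=(-1)^{i}\,Re\,J(i,i;r,\kappa)$. For $i=0$ we have $\zeta(0,0)=0$ and $(q^{r})^{0}=1$, giving $H(r,\kappa)_{0,0}=\kappa/\sqrt{2p+1}$; for $i=1$ we have $\zeta(1,1)=1$, giving $H(r,\kappa)_{1,1}=-\tfrac{2\kappa}{\sqrt{2p+1}}\cos\!\bigl(\tfrac{\pi r}{2p+1}\bigr)$. Both are routine substitutions.

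Now suppose $Diag(H(r,\kappa))=Diag(H(r',\kappa'))$. Comparing the $(0,0)$-entries and using $\sqrt{2p+1}>0$ forces $\kappa=\kappa'$. Comparing the $(1,1)$-entries and cancelling the nonzero common factor $-2\kappa/\sqrt{2p+1}$ yields $\cos(\pi r/(2p+1))=\cos(\pi r'/(2p+1))$. Because $r$ and $r'$ are positive integers strictly less than $2p+1$ (being coprime to it), the arguments $\pi r/(2p+1)$ and $\pi r'/(2p+1)$ lie in the open interval $(0,\pi)$, on which cosine is strictly decreasing, hence injective; therefore $r=r'$. Thus $(r,\kappa)=(r',\kappa')$, and applying the Proposition gives back the full matrix equality $H(r,\kappa)=H(r',\kappa')$, which finishes the corollary.

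I do not expect a genuine obstacle. The single point that should be argued rather than asserted is that $r$ is recoverable from $\cos(\pi r/(2p+1))$: this uses exactly the standing hypothesis $1\le r<2p+1$ together with injectivity of $\cos$ on $(0,\pi)$. One should also record that $p\ge 1$, so that the index $1$ occurs among $\{0,\dots,p\}$ and the $(1,1)$-entry exists (the degenerate case $2p+1=1$ does not arise).
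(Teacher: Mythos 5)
Your proof is correct and follows essentially the same route as the paper: read $\kappa$ off the $(0,0)$ entry and $r$ off the $(1,1)$ entry $-\tfrac{2\kappa}{\sqrt{2p+1}}\cos\bigl(\tfrac{\pi r}{2p+1}\bigr)$. The only cosmetic difference is that you settle the recovery of $r$ by strict monotonicity of cosine on $(0,\pi)$ with $1\le r<2p+1$, whereas the paper phrases it as the cosine narrowing $r$ to the pair $\{r,-r\}$ and oddness selecting one; your version is, if anything, the more direct justification.
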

\begin{proof}
$H(r,\kappa)_{0,0}$ determines $\kappa$ and $H(r,\kappa)_{1,1}\propto Re(q^r)$ narrows $r$ to one of two choices.
Taken modulo $2p+1$ only one of $r$ and $-r$ is odd, and so $H(r,\kappa)_{1,1}$ determines $H(r,\kappa)$.
\end{proof}
By inspection, we see that
\begin{align*}
(F_{(r,k)})_{\psi_\pm\psi_\pm\psi_\pm}^{\psi_{\pm;\phi_i\phi_i}}(F_{(r,k)})_{\psi_\pm \phi_i \psi_\pm}^{\phi_i; \psi_\pm \psi_\pm}&=H(r,k)_{i,i}
\intertext{so define }
X_p(r,\kappa,i)&=\frac{\sqrt{2p+1}}{2^{\zeta(i,i)}}\sum_{j\in\{+,-\}} \left( (F_{(r,k)})_{\psi_j\psi_j\psi_j}^{\psi_j;\phi_i\phi_i}(F_{(r,k)})_{\psi_j \phi_i \psi_j}^{\phi_i; \psi_j \psi_j}\right)\\
               &=(-1)^{i^2}\cos\left(\frac{i^2 r \pi}{2p+1}\right). \\
\intertext{and the $(p+1)$-tuple}
X_p(r,\kappa) &=(X_p(r,\kappa,i)\vert i=0,\ldots, p). \nonumber
\end{align*}
\begin{corollary}\label{cor:hinvs}
$X_p(r,\kappa)$ uniquely determines the values for all $G$-invariant monomials and thus all $Aut(N)\ltimes G$-invariant monomials.
\end{corollary}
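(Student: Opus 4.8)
The plan is to recover the pair $(r,\kappa)$ from the tuple $X_p(r,\kappa)$; once this is done the claim is automatic. Indeed, the value of a $G$-invariant monomial of the shape \eqref{eq:invmons} depends only on the gauge class, the gauge classes among our solutions are in bijection with the pairs $(r,\kappa)$ (as recorded at the start of Section~\ref{s:equivs}), and, for fixed $p$, every $6J$-symbol listed in Section~\ref{ss:AD} is an explicit closed-form function of $(r,\kappa)$; hence so is the value of any $G$-invariant monomial, and of any $Aut(N)$-linear combination of such, i.e.\ of any $Aut(N)\ltimes G$-invariant. So it suffices to show $X_p(r,\kappa)$ determines $(r,\kappa)$.

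First I would verify that the expressions out of which $X_p$ is assembled really are invariant values. Each summand $(F_{(r,\kappa)})_{\psi_j\psi_j\psi_j}^{\psi_j;\phi_i\phi_i}\,(F_{(r,\kappa)})_{\psi_j\phi_i\psi_j}^{\phi_i;\psi_j\psi_j}$ is a rational monomial in the $F$-symbols, and substituting the gauge action \eqref{eq:fgauge} one sees the gauge weights telescope: the weight $g_{\psi_j\psi_j}^{\phi_i}$ occurs with both signs of exponent inside each factor and drops out, the first factor then contributing $g_{\phi_i\psi_j}^{\psi_j}(g_{\psi_j\phi_i}^{\psi_j})^{-1}$ and the second its reciprocal, so the product is $G$-invariant. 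Summing over $j\in\{+,-\}$ further symmetrizes under the $\Z_2\subset Aut(N)$ exchanging $\psi_+\leftrightarrow\psi_-$; thus $X_p(r,\kappa,0)$ is a genuine $Aut(N)\ltimes G$-invariant (the index $0$ labels $\id$, which is fixed by all of $Aut(N)$), while for $i\geq 1$ the residual $\mathbb G_{2p+1}^\times$-action at worst permutes the entries $X_p(r,\kappa,i)$ of the tuple.

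Next I would evaluate. By \eqref{eq:four} the first factor is $H(r,\kappa)_{i,i}$. When $i=0$ the relevant $\phi$-index is the identity object, so the second factor has an $\id$ leg and equals $1$; hence the summand is exactly $H(r,\kappa)_{0,0}=\kappa/\sqrt{2p+1}$ and $X_p(r,\kappa,0)$ reads off $\kappa$ linearly. When $i\geq 1$, \eqref{eq:rotate} gives $F_{\psi_j\phi_i\psi_j}^{\phi_i}=\bigl(F_{\phi_i\psi_j\phi_i}^{\psi_j}\bigr)^T$, and \eqref{eq:twotwonext} identifies $F_{\phi_i\psi_j\phi_i}^{\psi_j}$ with $\mp(-1)^{i^2}D(r;i^2;-1,1,1,1)$, whose $(\psi_j,\psi_j)$ diagonal entry is $(-1)^{i^2}Re(q^{ri^2})$; since $H(r,\kappa)_{i,i}=(-1)^{i^2}(2\kappa/\sqrt{2p+1})\,Re(q^{ri^2})$, the summand equals a fixed nonzero multiple of $\kappa\,H(r,\kappa)_{i,i}^2$, and so does $X_p(r,\kappa,i)$ after the normalization and $j$-sum. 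Thus, $\kappa$ already being known, $X_p(r,\kappa)$ recovers $H(r,\kappa)_{i,i}^2$ for every $i$, in particular $H(r,\kappa)_{1,1}^2$, which is a fixed nonzero multiple of $\cos^2\!\bigl(\pi r/(2p+1)\bigr)$.

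This is precisely the data that the proof of Corollary~\ref{cor:hdiag} uses: $\kappa$ is read off, and $\cos^2\!\bigl(\pi r/(2p+1)\bigr)$ narrows $r$ to the pair $\{r,\,2p+1-r\}$, of which only the odd member is an admissible value (since $2p+1$ is odd) and equals $r$. Hence $X_p(r,\kappa)$ determines $(r,\kappa)$, which by the first paragraph determines the value of every $G$-invariant and every $Aut(N)\ltimes G$-invariant monomial. The main obstacle is the middle computation: unwinding \eqref{eq:rotate} and \eqref{eq:twotwonext} to confirm that the two-symbol product collapses to $H(r,\kappa)_{i,i}$ times the matching cosine (and checking its gauge weights cancel). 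Everything after that is a direct appeal to Corollary~\ref{cor:hdiag} and the explicitness of the $F$-symbols.
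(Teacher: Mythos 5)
Your proof is correct and follows the same route the paper intends for this corollary: show that $X_p(r,\kappa)$ recovers the diagonal data of $H(r,\kappa)$ and hence the pair $(r,\kappa)$ via Corollary \ref{cor:hdiag}, then observe that $(r,\kappa)$ fixes every $6J$-symbol and therefore the value of every $G$-invariant and every $Aut(N)\ltimes G$-invariant monomial. Your more careful evaluation of the two-symbol product --- obtaining a fixed nonzero multiple of $\kappa\,H(r,\kappa)_{i,i}^2$ rather than $H(r,\kappa)_{i,i}$ as the paper asserts ``by inspection'' --- is a harmless (and arguably necessary) correction of detail: $\kappa$ is still read off from the $i=0$ entry and $r$ from $\cos^2\bigl(\pi r/(2p+1)\bigr)$ together with oddness of $r$, so the argument lands in the same place.
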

\begin{corollary}
The permutation $\psi_+\rightarrow \psi_-$ acts as a gauge transformation.
\end{corollary}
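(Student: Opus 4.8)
The plan is to show that the automorphism $\nu : \psi_+ \leftrightarrow \psi_-$ (fixing $\mathbf 1$, $\epsilon$, and all $\phi_i$) sends the fusion system $F_{(r,\kappa)}$ to a gauge-equivalent one, i.e. that $F^\nu$ and $F$ lie in the same $G$-orbit of $X(L,N)$. By Theorem~\ref{thm:invs} it suffices to check that $F_{(r,\kappa)}$ and $F^\nu_{(r,\kappa)}$ agree on every $G$-invariant monomial, and by Corollary~\ref{cor:hinvs} the values of all $G$-invariant monomials are determined by the tuple $X_p(r,\kappa)$. So the entire statement reduces to the identity $X_p(r,\kappa) = X_p(r,\kappa)^\nu$ — and since $X_p(r,\kappa,i)$ was computed to equal $(-1)^{i^2}\cos\!\bigl(\tfrac{i^2 r \pi}{2p+1}\bigr)$, a quantity in which no $\psi$-label appears at all, this is immediate. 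Hence the two main steps are: (i) observe $\nu \in Aut(N)$, which is part of the Proposition in Section~\ref{ss:FR}; and (ii) invoke Corollary~\ref{cor:hinvs} together with the manifest $\nu$-invariance of the formula for $X_p(r,\kappa,i)$.

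More explicitly, I would argue as follows. Since $\nu$ exchanges $\psi_+$ and $\psi_-$ and fixes everything else, the defining formula
\[
X_p(r,\kappa,i)=\frac{\sqrt{2p+1}}{2^{\zeta(i,i)}}\sum_{j\in\{+,-\}} (F_{(r,\kappa)})_{\psi_j\psi_j\psi_j}^{\psi_j;\phi_i\phi_i}(F_{(r,\kappa)})_{\psi_j \phi_i \psi_j}^{\phi_i; \psi_j \psi_j}
\]
is visibly symmetric under relabelling $\psi_j \mapsto \psi_{\nu(j)}$, because the sum ranges over both signs $j \in \{+,-\}$; thus $X_p(r,\kappa)^\nu = X_p(r,\kappa)$. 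By Corollary~\ref{cor:hinvs} every $G$-invariant monomial $m$ therefore satisfies $F^\nu_{(r,\kappa)}(m) = F_{(r,\kappa)}(m)$. Applying the gauge-detection half of Theorem~\ref{thm:invs} (the $P$ monomials $m_1,\dots,m_P$ detect gauge equivalence), we conclude that $F^\nu_{(r,\kappa)}$ and $F_{(r,\kappa)}$ are gauge equivalent, i.e. there is a $g \in G$ with $F^\nu_{(r,\kappa)} = (F_{(r,\kappa)})^g$. Since permutation by $\nu$ implements the object-permuting monoidal functor of \cite{2013arXiv1305.2229D} and it here coincides, up to gauge, with the identity-on-objects gauge functor, the automorphism $\psi_+ \to \psi_-$ acts as a gauge transformation.

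There is essentially no obstacle here: the content was already front-loaded into the computation of $X_p(r,\kappa,i)$ and into Corollary~\ref{cor:hinvs}, and the only thing left is to notice that that expression does not see the sign on $\psi$. The one point that deserves a sentence of care is the passage from ``agrees on all $G$-invariant monomials'' to ``is gauge equivalent'': this is exactly the nontrivial direction of Theorem~\ref{thm:invs}, so it should be cited rather than reproved. If one wanted to be fully self-contained one could instead exhibit the gauge $g$ directly by comparing the $H$, $H'$, $\pm G$, $D$, $E$ blocks of $F$ and $F^\nu$ — the sign differences in \eqref{eq:three}, \eqref{eq:twotwonext}, \eqref{eq:twotwomixed} between $\psi_+$ and $\psi_-$ labelled matrices are absorbed by setting $g$ equal to $-1$ on the appropriate generators $\gamma_{ab}^c$ involving a single $\psi$ and $+1$ elsewhere — but this is the routine calculation the invariant-theoretic argument is designed to avoid, so I would relegate it to a remark at most.
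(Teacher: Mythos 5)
Your argument is correct and is exactly the route the paper intends: the corollary is stated without proof precisely because it follows from Corollary~\ref{cor:hinvs} together with the manifest symmetry of $X_p(r,\kappa,i)$ under $\psi_+\leftrightarrow\psi_-$ (the sum over $j\in\{+,-\}$), plus the gauge-detection direction of Theorem~\ref{thm:invs}. Your closing remark about exhibiting the gauge $g$ explicitly via the sign differences in the $\pm G$, $D$, $E$ blocks is a valid alternative but, as you say, is the computation the invariant-theoretic setup is designed to avoid.
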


There is a natural action of $z\in \mathbb G_{2p+1}^{\times}$ on $X(r,\kappa)$ given by
\begin{align*}
z\cdot X_p(r,\kappa)&=(z\cdot X_p(r,\kappa,i)\vert i=0,\ldots p) \\
             &=(X_p(r,\kappa ,g(z * i))\vert i=0,\ldots, p) \\
			 &=(X_p(r z^2,\kappa,i)\vert i=0,\ldots p) \\
			 &=X_p(r z^2,\kappa).
\end{align*}

\begin{theorem}\label{lem:trace}
Fix $p$ and let $(r,\kappa)$ and $(r',\kappa)$ be solutions as defined in \ref{ss:FS}. 
The following are equivalent:
\begin{enumerate}
	\item $(r,\kappa)$ and $(r',\kappa)$ are monoidally equivalent,
	\item there exists $z\in \mathbb G_{2p+1}^\times$ such that $X_p(r',\kappa) = X_p(r z^2, \kappa)$, and
	\item there exists $z\in \mathbb G_{2p+1}^\times$ such that $g(r') = g(r z^2)$.
\end{enumerate}
\end{theorem}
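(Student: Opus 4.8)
The plan is to establish the cycle $(1)\Rightarrow(2)\Rightarrow(3)\Rightarrow(1)$, leaning on the machinery already assembled: Theorem~\ref{thm:invs} reduces monoidal equivalence to equality of $Aut(N)\ltimes G$-invariants, and Corollary~\ref{cor:hinvs} tells us that $X_p(r,\kappa)$ records exactly this invariant data. For $(1)\Rightarrow(2)$: if the fusion systems for $(r,\kappa)$ and $(r',\kappa)$ are monoidally equivalent, then by Theorem~\ref{thm:invs} they agree on all $Aut(N)\ltimes G$-invariant linear combinations of $G$-invariant monomials. Since a monoidal equivalence is realized (up to gauge) by some $\nu\in Aut(N)=\mathbb G_{2p+1}^\times\times\Z_2$, and the $\Z_2$-factor $\psi_+\leftrightarrow\psi_-$ acts as a gauge transformation by the corollary just above Theorem~\ref{lem:trace}, the relevant permutation is an element $z\in\mathbb G_{2p+1}^\times$. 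By the computation displayed right before the theorem statement, $z\cdot X_p(r,\kappa)=X_p(rz^2,\kappa)$, and applying $\nu$ to the solution $(r,\kappa)$ sends $X_p(r,\kappa)$ to $X_p(rz^2,\kappa)$; combined with gauge-invariance of $X_p$ and the fact that gauge equivalence forces equality of $X_p$ (Corollary~\ref{cor:hinvs} again, since $X_p$ reconstructs all $G$-invariants and is itself built from gauge-invariant ratios by the inspection formula), we get $X_p(r',\kappa)=X_p(rz^2,\kappa)$.

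For $(2)\Rightarrow(3)$: unwinding the explicit formula $X_p(r,\kappa,i)=(-1)^{i^2}\cos\!\bigl(\tfrac{i^2 r\pi}{2p+1}\bigr)$, the hypothesis $X_p(r',\kappa)=X_p(rz^2,\kappa)$ forces, taking $i=1$, that $\cos\!\bigl(\tfrac{\pi r'}{2p+1}\bigr)=\cos\!\bigl(\tfrac{\pi r z^2}{2p+1}\bigr)$, hence $r'\equiv\pm rz^2\pmod{2(2p+1)}$; reducing modulo $2p+1$ and passing through the quotient map $g$ (which identifies $a$ with $-a$) yields $g(r')=g(rz^2)$. One has to be slightly careful that the sign $(-1)^{i^2}$ is harmless: $i^2\equiv i\pmod 2$, and both sides carry the same parity factor since the index $i$ is the same on both sides, so the cosine values must literally match. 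For $(3)\Rightarrow(1)$: if $g(r')=g(rz^2)$ then $r'\equiv\pm rz^2\pmod{2p+1}$; since $r'$ and $rz^2$ are both odd and reduction mod $2p+1$ (an odd number) together with the sign choice pins down a unique odd representative — this is exactly the argument used in the proof of Corollary~\ref{cor:hdiag} — we may assume $r'\equiv rz^2\pmod{2(2p+1)}$ after possibly replacing $z$-related data, whence $X_p(r',\kappa)=X_p(rz^2,\kappa)=z\cdot X_p(r,\kappa)$. By Corollary~\ref{cor:hinvs} this equality of $X_p$-tuples means the two fusion systems agree on all $Aut(N)\ltimes G$-invariants, so Theorem~\ref{thm:invs} delivers the monoidal equivalence.

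The main obstacle I anticipate is the bookkeeping in $(1)\Rightarrow(2)$ and $(3)\Rightarrow(1)$: making precise that the $Aut(N)\ltimes G$-invariants are \emph{completely} captured by the finite tuple $X_p(r,\kappa)$ — i.e. that Corollary~\ref{cor:hinvs} is being used in both directions — and that the $\Z_2$ ambiguity in $\psi_\pm$, the $\lambda=\pm1$ inverse-$R$ ambiguity (irrelevant at the fusion level here), and the two-fold sign ambiguity $r\mapsto -r$ all collapse correctly onto the single orbit relation $g(r')\in g(r(\mathbb G_{2p+1}^\times)^2)$. Everything else is either a direct appeal to Theorem~\ref{thm:invs} or the elementary trigonometric identity for $\cos$, so the real content is organizing these reductions cleanly rather than any hard computation.
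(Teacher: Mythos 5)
Your proposal is correct and follows essentially the same route as the paper: both arguments hinge on the $\mathbb G_{2p+1}^\times$-action $z\cdot X_p(r,\kappa)=X_p(rz^2,\kappa)$, use the explicit cosine formula to pass between $X_p$-equality and $g(r')=g(rz^2)$, and invoke Corollary~\ref{cor:hinvs} together with Theorem~\ref{thm:invs} to convert agreement of the $X_p$-tuples back into a monoidal equivalence. Your extra care with the sign $(-1)^{i^2}$ and the odd-representative issue modulo $2(2p+1)$ is a welcome refinement of a point the paper treats more briskly, but it does not change the argument.
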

\begin{proof}
First assume that $(r,\kappa)$ and $(r',\kappa)$ are monoidally equivalent via some $z\in \mathbb G_{2p+1}^\times$.
$z$ acts on $F_{\psi_\pm \psi_\pm \psi_\pm}^{\psi_\pm}$ by 
\begin{align*}
z\cdot F_{\psi_\pm\psi_\pm\psi_\pm}^{\psi_\pm;\phi_i\phi_j}&=F_{\psi_\pm\psi_\pm\psi_\pm}^{\psi_\pm;\phi_{g(z i)}\phi_{g(z j)}}=H(r z^2, \kappa)
\intertext{so}
X_p(r',\kappa)&=X_p(r z^2,\kappa).
\end{align*} 
This implies $r'$ and $r z^2$ are congruent modulo $2p+1$ and so $g(r')=g(r z^2)$.

Assume $\exists z\in \mathbb G_{2p+1}^\times$ such that $g(r') = g(r z^2)$.
Then, modulo $2p+1$, $r' \cong r z^2$ or $r' \cong -r z^2$, but regardless $X_p(r',\kappa)=X_p(r z^2,\kappa)$. 
By Corollary \ref{cor:hinvs}, for any $Aut\ltimes G$-invariant $l$, 
$$F_{(r',\kappa)}(l)=F_{(r,\kappa)}(l).$$ 
Thus $(r',\kappa)$ and $(r,\kappa)$ are monoidally equivalent.
\end{proof}

\subsection{Calculating The Number of Monoidal Equivalence Classes}
It is not an easy task to calculate how many monoidally inequivalent sets of F-moves there are. 
Nevertheless we can construct a framework in which the question is seemingly simple. 
From the previous section we showed that the F-moves labeled by $(r,\kappa)$ and $(r',\kappa')$ are monoidally equivalent if and only if $\kappa=\kappa'$ and there exists a $z\in \mathbb G_{2p+1}^\times$ such that $r'=rz^{2}$. 
Thus we define 
\begin{eqnarray*}
	O_{r} & = & \{rz^{2} | z \in \mathbb G^{\times}_{2p+1}\} \subseteq \mathbb G^{\times}_{2p+1}.
\end{eqnarray*}
Each set contains elements such that the F-moves labeled by $(r,\kappa)$ and $(r',\kappa)$ are monoidally equivalent. Furthermore, $(r,\kappa)$ and $(r',\kappa)$ are monoidally equivalent if and only if $O_{r} = O_{r'}$. We realize that $O_{1}$ is a normal subgroup and that $O_{r}=rO_{1}$ are conjugacy classes. Thus the number of monoidally inequivalent F-moves must be twice (due to $\kappa$) $|\mathbb G^{\times}_{2p+1}|/|O_{1}|$. \\

It is possible to simplify this further by considering the map $f:\mathbb G^{\times}_{2p+1} \rightarrow O_{1}$ defined by $f(z)=z^{2}$. 
This map is clearly onto, i.e. the image of $f$ is $O_{1}$. 
As we are dealing with the finite groups we can use the first isomorphism theorem and deduce that $|\mathbb G^{\times}_{2p+1}|/|O_{1}| = |ker(f)|$ where  $ker(f)$ is the kernel of $f$ and is the subgroup
\begin{eqnarray*}
	ker(f) & = & \{r\in \mathbb G^{\times}_{2p+1} | r^{2}=1\}.
\end{eqnarray*}
Thus the number of monoidally inequivalent F-moves is twice the number of elements of $\mathbb G^{\times}_{2p+1}$ that square to the identity (because of $\kappa$). 
We know that $|ker(f)|=2^{m}$ for some $m\in \mathbb N$.

If $r \in \Z_{2p+1}^{\times}$ satisfies $r^{2}=1$ or $-1$ then $r \in \mathbb G^{\times}_{2p+1}$ satisfies $r^{2}=1$. 
We can then deduce that 
\begin{eqnarray*}
	|ker(f)| & = & \frac{1}{2}\left(|\{r\in \Z_{2p+1}^{\times} | r^{2}=1\}| + |\{r\in \Z_{2p+1}^{\times} | r^{2}=-1\}|\right).
\end{eqnarray*}
Firstly, consider the prime decomposition $2p+1= p_{1}^{a_{1}}...p_{l}^{a_{l}}$ where the $p_{i}$ are prime and $a_{i} > 0$. We define sets
\begin{eqnarray*}
	B_{1} & = & \{r\in \Z_{2p+1}^{\times} | r^{2}=1\}, \\
	B_{2} & = & \{r\in \Z_{2p+1}^{\times} | r^{2}=-1\}.
\end{eqnarray*}
We note that as $B_{1}$ is also a group and that its order is precisely $2^{l}$. Next, suppose that $r,r'\in B_{2}$ it follows that $rr'\in B_{1}$, similarly
if $r\in B_{2}$ and $r'\in B_{1}$ then $rr'\in B_{2}$. Thus we have that the cardinality of $B_{2}$ is zero or $|B_{2}|=|B_{1}|=2^{l}$. For $|B_{2}|$ to be non-zero we must have that there exists $b_{i}$ such that $b_{i}^{2} \equiv -1 \,\mbox{mod}\, p_{i}^{a_{i}}$ for all $i$.\\

\noindent
From this we can deduce the number of monoidal classes
\begin{eqnarray}\label{eq:numcats}
	\# \mbox{monoidal classes} & = & \left\{\begin{array}{cl} 2^{l+1} & \mbox{if}\,\, \exists b \,\,\mbox{s.t.}\,\, b^{2} \equiv -1\,\, \mbox{mod}\,\, (2p+1), \\ 2^{l} & \mbox{otherwise}, \end{array} \right.
\end{eqnarray}
where $l$ is the number of primes that divide $(2p+1)$. Here we have used that $\Z_{2(2p+1)}^{\times} \cong \Z_{2p+1}^{\times}.$

We note that for the cases in which there are $2^l$ monoidal classes, we obtain $2^{l+1}$ modular 
categories, because of the $R$-symbols ($\lambda = +1$) and their inverses ($\lambda = -1$), which are not equivalent.
However, in the cases in which there are $2^{l+1}$ monoidal classes, the modular categories obtained by the $R$-symbols on the one hand and their inverses on the other, are pairwise equivalent.
This means that also in this case, we obtain $2^{l+1}$ modular categories, in accordance with
\cite[Theorem 3.2]{2016arXiv160105460A}.

\section{Examples}\label{s:examples}

In this section we explicitly compute the classifying invariants for a number of ``small'' categories.
The case of $\sost{3}$ is known to be Grothendieck equivalent to $\mathfrak{su}(2)_4$ and both the fusion and modular structures are fully classified (see \cite{MR1237835}).
Beyond this, there are several guideposts:
\begin{itemize}
	\item The classification of weakly integral modular categories of dimension $4m$ is given in \cite{2014arXiv1411.2313B}. This contains those $\sost{2p+1}$ of our family for which $2p+1$ is square free.
	\item The classification of integral modular categories of dimension $4q^2$ is given in \cite{2013arXiv1303.4748B} where $q$ is prime.
	\item Explicit formulae for the modular data of $\Z_2$-equivariantizations of Tambara-Yamagami categories is given in \cite{MR2587410} to which our categories are Grothendieck equivalent.
\end{itemize}
\subsection{$\sost{3}$}
For $\sost{3}$, $\mathbb G_{3}^\times = \{1\}$.
Thus we have only one two dimensional simple object for which the automorphism group is trivial. 
We have two solutions and two fusion categories. 
Up to permutation there are four modular categories, one for each of the four possible T-matrices coming from \eq{tw1}:
\begin{align*}
diag(1,1,-(-1)^{1/3},-(-1)^{1/4},(-1)^{1/4}) & \\
diag(1,1,(-1)^{2/3},(-1)^{3/4},-(-1)^{3/4}) & \\
diag(1,1,-(-1)^{1/3},(-1)^{3/4},-(-1)^{3/4}) & \\
diag(1,1,(-1)^{2/3},-(-1)^{1/4},(-1)^{1/4}) & \\
\end{align*}
Corresponding to solutions $(1,1,1,1)$, $(1,1,1,-1)$, $(1,1,-1,1)$ and $(1,1,-1,-1)$ respectively.
This is in agreement with \cite{MR2587410}.
In this case the two braidings for each fusion category are inequivalent.

\subsection{$\sost{5}$}
There are two two dimensional objects and $\mathbb G_{5}^\times \cong \Z_2$. 
Both elements square to the identity and so there are four fusion categories from \eq{numcats}.
This can be seen from the sets
\begin{align*}
X_2(1,1) &= \left\{1,\frac{1}{4} \left(-1-\sqrt{5}\right),\frac{1}{4} \left(-1-\sqrt{5}\right)\right\}, \text{ and }\\
X_2(3,1) &= \left\{1,\frac{-1}{4} \left(1-\sqrt{5}\right),\frac{-1}{4} \left(1-\sqrt{5}\right)\right\}, \\
\end{align*}

From \cite{MR2587410} there are also four modular categories, corresponding to solutions $(2,1,1,1)$, $(2,1,-1,1)$, $(2,3,1,1)$, and $(2,3,-1,1)$ with $T$-matrices
\begin{align*}
&diag(1, 1, -(-1)^{1/5}, (-1)^{4/5}, -1, 1),\\
&diag(1, 1, -(-1)^{1/5}, (-1)^{4/5}, \imath, -\imath),\\
&diag(1, 1, -(-1)^{3/5}, (-1)^{2/5}, -\imath, \imath), \text{ and}\\
&diag(1, 1, -(-1)^{3/5}, (-1)^{2/5}, 1, -1) .
\end{align*}
In this case the braidings corresponding to $\lambda = \pm 1$ are equivalent.

\subsection{$\sost{7}$}
There are three two dimensional objects and $G_{7}^\times \cong \Z_3$.
We have that
\begin{align*}
X_3(1,1) &= \left\{1,-\cos \left(\frac{\pi }{7}\right),-\sin \left(\frac{\pi }{14}\right),\sin \left(\frac{3 \pi }{14}\right)\right\}, \\
X_3(3,1) &= \left\{1,-\sin \left(\frac{\pi }{14}\right),\sin \left(\frac{3 \pi }{14}\right),-\cos \left(\frac{\pi }{7}\right)\right\},\text{ and} \\
X_3(5,1) &= \left\{1,\sin \left(\frac{3 \pi }{14}\right),-\cos \left(\frac{\pi }{7}\right),-\sin \left(\frac{\pi }{14}\right)\right\}.
\end{align*}

These are all clearly related via permutation.
Computing the modular data for our solutions, we find that there are four modular categories distinguished by their $T$-matrices
\begin{align*}
&diag(1,1,-(-1)^{1/7},(-1)^{4/7},(-1)^{2/7},-(-1)^{1/4},(-1)^{1/4}) \\
&diag(1, 1, (-1)^{6/7}, -(-1)^{3/7}, -(-1)^{5/7}, (-1)^{3/4}, -(-1)^{3/4}) \\
&diag(1,1,-(-1)^{1/7},(-1)^{4/7},(-1)^{2/7},-(-1)^{3/4},(-1)^{3/4}), \text{ and} \\
&diag(1,1,(-1)^{6/7},-(-1)^{3/7},-(-1)^{5/7},(-1)^{1/4},-(-1)^{1/4}) ,
\end{align*}
corresponding to solutions $(3,1,1,1)$, $(3,1,1,-1)$, $(3,1,-1,1)$, and $(3,1,-1,-1)$ respectively.

\subsection{$\sost{9}$}
For $\sost{9}$ there are four two dimensional objects, but we have that $\mathbb G_{9} \cong \mathbb Z_3$ since there are three odd integers less than and coprime to $9$, $1$, $5$, and $7$. 
These correspond to the $S_4$ subgroup generated by the cycle $\langle 124\rangle$. 

The sets $X_4(r,1)$ are then: 

\begin{align*}
X_4(1,1)&=\left\{-\cos\left(\frac \pi 9\right),\cos\left(\frac{4\pi}{9}\right),1,\cos\left(\frac{2\pi}{9}\right)\right\}, \\
X_4(5,1)&=\left\{\cos\left(\frac{4\pi}{9}\right),\cos\left(\frac{2\pi}{9}\right),1,-\cos\left(\frac \pi 9\right)\right\}, \text{ and }\\ 
X_4(7,1)&=\left\{\cos\left(\frac{2\pi}{9}\right),-\cos\left(\frac \pi 9\right),1,-\cos\left(\frac{4\pi}{9}\right)\right\}. 
\end{align*}

From this it's easy to see that $\langle 124\rangle$ sends $X_4(5,1)$ to $X_4(1,1)$ and $\langle 142\rangle$ sends $X_4(7,1)$ to $X_4(1,1)$. 
Thus there is only one equivalence class.
Finally, as before there are four modular structures distinguished by their T-matrices:
\begin{align*}
&diag(1, 1, -(-1)^{1/9}, (-1)^{4/9}, 1, -(-1)^{7/9}, -1, 1), \\
&diag(1, 1, (-1)^{8/9}, -(-1)^{5/9}, 1, (-1)^{2/9}, -1, 1), \\
&diag(1, 1, -(-1)^{1/9}, (-1)^{4/9}, 1, -(-1)^{7/9}, -\imath, \imath),\text{ and} \\
&diag(1, 1, (-1)^{8/9}, -(-1)^{5/9}, 1, (-1)^{2/9}, \imath, -\imath) ,
\end{align*}
corresponding to solutions $(4,1,1,1)$, $(4,1,1,-1)$, $(4,1,-1,1)$, and $(4,1,-1,-1)$ respectively.

\section{Conclusions}

In conclusion, we studied a family of fusion and modular systems with fusion rules that are equivalent to the fusion rules of $\sost{2p+1}$.
The associated anyons are dubbed `metaplectic anyons'.
We obtained explicit expressions for all the $F$- and $R$-symbols (as well as the modular data, that is, the $S$- and $T$-matrices (i.e., the twists) and the central charge).
Based on the explicit expressions for the $F$-symbols, we conjecture a classification of the monoidal equivalence classes via an analysis of suitably chosen gauge invariant combinations of $F$-symbols.
We obtained an expression for the number of monoidal equivalence classes, and finished by giving some explicit examples.
For future work, it would be interesting to see if the work in this paper can be extended to the fusion and modular systems equivalent to the fusion rules of $\sost{2p}$.

\begin{appendices}
\section{Proof of Solution To Pentagon Equations}\label{a:proof}

In this appendix, we briefly explain how we obtained the $F$- and $R$-symbols
that we present in the main text, and how we verified that they indeed are
solutions to the pentagon and hexagon equations, for arbitrary
tuples $(p,r,\kappa,\lambda)$ with $p$ a positive integer, $r$ an odd integer
such that $1\leq r < 2p+1$ and $gcd(r,2p+1)=1$, $\kappa = \pm 1$ and $\lambda = \pm 1$.

We obtained the $F$-symbols, by numerically calculating them from the quantum group based
on $\mathfrak{so}(2p+1)$ at the appropriate roots of unity for small $p$. From this data, we were
able to extract the general pattern, presented in section \ref{ss:FS}.

The main difficulty in verifying that the $F$- and $R$-symbols we obtained do indeed satisfy the
pentagon and hexagon equations lies not so much in the actual verification of the equations,
but rather in convincing oneself that all the pentagon and hexagon equations one needs to verify are
indeed covered.
The reason for this is the structure of the fusion rules of the dimension two objects, which are such
that one has to consider many cases separately. This pertains both to all the possible labelings of
the equations, as well as to the sum present in these equations.

In order to complete this task, we used the algebraic manipulation program {\em Mathematica}.
We first carefully constructed all the pentagon equations that can occur algebraically, that is, for
arbitrary tuples $(p,r,\kappa)$. After having convinced ourselves that we indeed covered all
the possible cases, we then verified this, for low, explicit values of $p$, by using the algebraic
result to explicitly construct the labels of all the pentagon equations for these explicit values of $p$. 
For explicit $p$, one can also generate all possible cases directly. As expected, we indeed covered
all the cases.

After we generated all the cases of the pentagon equations for the tuples $(p,r,\kappa)$, we did
the actual verification. In all but one class of equations, the equations are straightforwardly verified,
and again, we used Mathematica for this. The most complicated
cases occur when the sum over $h$ in
\eq{F1} runs over all dimension two objects (and possibly $\mathbf 1$ or $\epsilon$). To
check that the $F$-symbols satisfy these equations, one rewrites the (maximally triple) products
of sines and cosines that occur as a sum of sines and cosines, and one uses Lagrange equations
to perform the sum. One can actually use the same procedure to verify that the matrices $G$
and $H^{(')}$ \eq{G}, \eq{H} are orthogonal.
 
To verify that the solutions of the $F$- and $R$-symbols presented also satisfy the
hexagon equations \eq{R1},\eq{R2} we used the same strategy as we used to verify the pentagon
equations \eq{F1}. That is, we explicitly constructed all different cases of the hexagon
equations using Mathematica, which we also used to do the actual checking of the equations,
except for one class of equations, that is non-trivial, and was checked by hand.
  
The class of equations that we explicitly verified has all the labels $a,b,c,d$ of \eq{R1},\eq{R2} equal
to $\psi_\pm$. The sum over $e$ runs over all dimension two objects, and possibly $\mathbf{1}$
or $\epsilon$. This class of equations falls apart into four distinct cases, of which we discuss the
simplest case for ease of presentation.
The other three cases can be shown to hold using similar arguments.

We first give the four classes of equations (without simplification) explicitly. The first class is
\begin{align}
\label{eq:H-explicit1}
\nonumber
&\frac{\kappa}{\sqrt{2p+1}} e^{\frac{r \pi \imath}{2}\bigl(2 - \kappa \imath^{p(1+p)} + p - (2p + 1 | r ) \bigr)} =
\\&
\frac{\kappa^2}{2p+1} +  
\frac{2 \kappa^2}{2p+1} \sum_{j=1}^{p} 
e^{\frac{j \pi \imath}{2}  \bigr( j - (-1)^p + r - \frac{j r}{2p+1}\bigr)} .
\end{align}
The second class is  
\begin{align}
\label{eq:H-explicit2}
&\frac{\sqrt{2}\kappa}{\sqrt{2p+1}} 
e^{\frac{\pi \imath}{2}
\bigl(
i_1(i_1-(-1)^p) -r i_1+ \frac{r i_1^2}{2p+1}+
r \bigl( 2 - \kappa \imath^{p(1+p)} + p - (2p + 1 | r ) \bigr)} =\\\nonumber
&\frac{\sqrt{2}\kappa^2}{2p+1} +
\frac{2\sqrt{2} \kappa^2}{2p+1} \sum_{j=1}^{p} (-1)^{j i_1}
\cos(\frac{r i_1 j \pi}{2p+1})
e^{\frac{j \pi \imath}{2}  \bigr( j - (-1)^p + r - \frac{j r}{2p+1}\bigr)} .
\end{align}
The third class of equations reads
\begin{align}
\label{eq:H-explicit3}
&\frac{2\kappa (-1)^{i_1 i_2} \cos(\frac{r i_1 i_2\pi}{2p+1})}{\sqrt{2p+1}} \times \\\nonumber
&e^{\frac{\pi \imath}{2}
\bigl(
i_1(i_1-(-1)^p) -r i_1+ \frac{r i_1^2}{2p+1}+
i_2(i_2-(-1)^p) -r i_2+ \frac{r i_2^2}{2p+1}+
r \bigl( 2 - \kappa \imath^{p(1+p)} + p - (2p + 1 | r ) \bigr)} =\\\nonumber
&\frac{2\kappa^2}{2p+1} + \\\nonumber
&\frac{4 \kappa^2}{2p+1} \sum_{j=1}^{p} (-1)^{j (i_1+i_2)}
\cos(\frac{r i_1 j \pi}{2p+1})
\cos(\frac{r i_2 j \pi}{2p+1})
e^{\frac{j \pi \imath}{2}  \bigr( j - (-1)^p + r - \frac{j r}{2p+1}\bigr)} .
\end{align}
Finally, the fourth class of equations takes the form
\begin{align}
\label{eq:H-explicit4}
&\frac{2\kappa (-1)^{(i_1-1)(i_2-1)} \sin(\frac{r i_1 i_2\pi}{2p+1})}{\sqrt{2p+1}} \times \\\nonumber
&e^{\frac{\pi \imath}{2}
\bigl(
i_1(i_1-(-1)^p) -r i_1+ \frac{r i_1^2}{2p+1}+
i_2(i_2-(-1)^p) -r i_2+ \frac{r i_2^2}{2p+1}+
r \bigl( 2 - \kappa \imath^{p(1+p)} + p - (2p + 1 | r ) \bigr)} =\\\nonumber
&-\frac{4 i \kappa^2 (-1)^{(-i_1-i_2)}}{2p+1}\times \\\nonumber
&\sum_{j=1}^{p} (-1)^{j (i_1+i_2-2)}
\sin(\frac{r i_1 j \pi}{2p+1})
\sin(\frac{r i_2 j \pi}{2p+1})
e^{\frac{j \pi \imath}{2}  \bigr( j - (-1)^p + r - \frac{j r}{2p+1}\bigr)} .
\end{align}
These equations hold if $p$ is a positive integer, $r$ is an odd integer
$1\leq r < 2p+1$ with $gcd(r,2p+1)=1$, $\kappa = \pm 1$ and $i_1, i_2$ are integer. In the
actual hexagon equations one has $i_1,i_2 = 1,2,\ldots,p$.
 
We note that \eq{H-explicit2} (and \eq{H-explicit1}) is obtained from \eq{H-explicit3} by
setting $i_2 = 0$ (and $i_1=0$). We also note that in the right hand side of
\eq{H-explicit4} is a sum of only $p$ terms, in comparison
to \eq{H-explicit1}, \eq{H-explicit2} and \eq{H-explicit3},
for which the right hand side is a sum of $p+1$ terms.

To verify the validity of \eq{H-explicit1}, we consider the RHS of this equation first.
Replacing $j$ by $2p+1-j$ in the summand leaves the summand the same (if $1\leq j \leq p$), which
means we can extend the sum from $j=1,\ldots, p$ to  $j=1,\ldots, 2p$ at the cost of a factor
of two. The sum can then be split in two pieces, namely in the even $j$'s $j = 2l$, with
$l=1,2,\ldots,p$ and odd $j$'s, $j = 2l-(2p+1)$, with $l=p+1,p+2,\ldots,2p$. In both cases, one
checks that the summand takes the form $e^{-\frac{2\pi \imath r l^2}{2p+1}}$, for both $p$ even and
odd. Hence, the RHS becomes a quadratic Gauss sum (see, for instance, \cite{MR1070716})
$$
\frac{1}{2p+1}\sum_{l=0}^{2p} e^{-\frac{2\pi \imath r l^2}{2p+1}} =
\frac{\epsilon_{2p+1}}{\sqrt{2p+1}} (-r | 2p+1) ,
$$
where $\epsilon_{2p+1} = 1$ for $2p+1 = 1 \bmod 4$ (i.e., $p$ even) and
$\epsilon_{2p+1} = \imath$ for $2p+1 = 3 \bmod 4$ (i.e., $p$ odd), and we
used that $\gcd(r,2p+1)=1$.

Standard manipulations of the Jacobi-symbol gives\\
$(-r|2p+1) = (-1)^\frac{(r+1)p}{2} (2p+1 | r)$, so that the RHS takes the form
$$
\frac{\epsilon_{2p+1} (-1)^{\frac{(r+1)p}{2}}}{\sqrt{2p+1}} (2p+1 | r) =
\frac{(2p+1|r)}{\sqrt{2p+1}} \times
\begin{cases}
1 &  p = 0 \bmod2 \\
\imath (-1)^{\frac{r+1}{2}} & p = 1 \bmod2
\end{cases} .
$$ 
We can now compare this result for the RHS of \eq{H-explicit1} with the LHS, by explicitly checking the
LHS for the four different cases of $p$ modulo 4. One finds that the LHS is indeed independent
of $\kappa = \pm 1$, and that the LHS equals the RHS, as we wanted to show.

\end{appendices}


\newcommand{\etalchar}[1]{$^{#1}$}
\def\cprime{$'$}
\providecommand{\bysame}{\leavevmode\hbox to3em{\hrulefill}\thinspace}
\providecommand{\MR}{\relax\ifhmode\unskip\space\fi MR }
\providecommand{\MRhref}[2]{%
  \href{http://www.ams.org/mathscinet-getitem?mr=#1}{#2}
}
\providecommand{\href}[2]{#2}

\end{document}